\begin{document}

\author{Uri Brezner}
\address{Einstein Institute of Mathematics, The Hebrew University of Jerusalem, Giv'at Ram, Jerusalem, 91904, Israel}
\email{uri.brezner@mail.huji.ac.il}

\keywords{Berkovich curves, liftings, differential forms}

\title{Tropical reduction and lifting of $q$-differentials on Berkovich curves}

\begin{abstract}
Given a complete real-valued field $k$ of residue characteristic zero, we study properties of a meromorphic $q$-differential form $\xi$ (a section of $\Omega_X^{\otimes q}$) on a smooth proper $k$-analytic curve $X$. In particular, we associate to $(X,\xi)$ a natural tropical reduction datum combining tropical-geometric data over the value group of $|k^\times|$ and algebro-geometric reduction data over the residue field $\widetilde{k}$. We show that this datum satisfies natural compatibility conditions, and prove a lifting theorem asserting that any compatible tropical reduction datum lifts to an actual pair $(X, \xi)$.
This generalizes the result of \cite{TT20} from $q=1$ to a general natural number $q$.
Furthermore, it is a non-Archimedean analog of \cite{BCGGM16}.
\end{abstract}
\maketitle

\section{Introduction}
\subsection{Background and Goal}
\subsubsection{Complex curves with differential forms}
In \cite{BCGGM18} Bainbridge, Chen, Gendron, Grushevsky and M\"{o}ller study the geometry of the moduli space of smooth complex curves of genus $g$ equipped with a meromorphic differential form (a section of the canonical line bundle perhaps twisted) $\Omega\calM_g$.
Toward that end they provide the moduli space with a stratification by the pattern of poles and zeros and provide a complete description for the closures of the strata in a certain compactification. More precisely, the stratification is given by a tuple of integers $\bfm=(m_1\.m_n)$ such that $\sum m_i=2g-2$, and the stratum $\Omega\calM_g(\bfm)$ parameterizes $(X, \omega, z_1\. z_n)$ where $X$ is a smooth curve of genus $g$, $z_1\. z_n\in X$ are distinct marked points, and $\omega$ is a differential form such that $\div(\omega)=\sum m_iz_i$.
The authors then describe the boundary points of a stratum in terms of a finite graph $\Gamma$, a real valued function defined on the set of vertices $\ell\colon V(\Gamma)\to \bbR$ and a set of differential forms $\{\omega_v\}$, one for each vertex of $\Gamma$ satisfying certain combinatorial and residue compatibility conditions. Probably, the main surprise was the discovery of a rather involved and not so intuitive \emph{global residue condition} (\cite[Definition~1.2(4)]{BCGGM18}), which had no analogs in previous works on adjacent fields.

\subsubsection{Complex curves with $q$-differential forms}
In a follow up work \cite{BCGGM16} the authors extend their methods to study the geometry of $\Omega^q\calM_g$ that is the moduli space of smooth complex curves of genus $g$ equipped with a meromorphic $q$-differential form (a section of the $q$-th power of the canonical line bundle twisted by the pole divisor).
This study leads the authors to the notion of the \emph{q-residue} of a $q$-differential form at a pole (\cite[\S~3.1]{BCGGM16}) and an even more involved \emph{global q-residue condition} (\cite[Definition ~1.4.(4)]{BCGGM16}).

\subsubsection{Berkovich curves with differential forms}
In \cite{TT20} Temkin and Tyomkin re-interpret the main parts of \cite{BCGGM18} in terms of non-archimedean geometry.
In this view the graph $\Gamma$, the function $\ell$ and set of differential forms $\{\omega_v\}$ associated to a $k$-analytic curve $X$ equipped with a meromorphic differential form $\omega$ are obtained as a tropical reduction datum of the pair $(X,\omega)$ (see \cite[Section~1.2]{TT20} for a short discussion on the meaning of tropical reduction).
The result is a much more algebraic approach that applies over any complete valuation field $k$ of residual characteristic zero.
More importantly, by introducing a new invariant, the \emph{residue function} (taking values in the base field $k$ and not its residue field!) that satisfies some natural compatibility conditions, the authors are able to recover the global residue condition of \cite{BCGGM18} and obtain a short proof of main result of \cite{BCGGM18}.

\subsubsection{Motivation}
The motivation for this work is to extend the results of \cite{TT20} to study the structure of $q$-differential forms on $k$-analytic curves.
In particular, we will re-interpret the main parts of \cite{BCGGM16} and obtain a short proof of  \cite{BCGGM16} Theorem 1.5. To achieve this, given a nice $k$-analytic curve $X$ (see \S\ref{sec notation}) and a meromorphic $q$-differential form $\xi$ we construct a tropical reduction datum of $(X, \xi)$, describe a list of compatibilities this datum satisfies, and show that the list is exhaustive by proving a lifting theorem - any compatible tropical reduction
datum lifts to an actual pair $(X, \xi)$.

\subsubsection{$q$-differentials}
A differential form $\omega$ defined on a complex curve $X$ induces a flat metric with conical singularities on $X$.
In this context pairs $(X,\omega)$ are called \emph{translation surfaces} and they arise in the study of various dynamical systems and in Teichm\"{u}ller theory.
Closely related are \emph{half-translation surfaces} consisting of a complex curve $X$ and a quadratic differential form (that is a $2$-differential form) $\eta$ on $X$ as $\eta$ also induces a flat metric with singularities on $X$ (surveys \cite{W}, \cite{M18}).
In light of this it is not surprising that there has been a rising interest in the more general $q$-differential forms in recent years, for example \cite{BCGGM16}, \cite{S} \cite{GT}.

\subsection{Structure of the paper}
Section \ref{sec trop_red} is dedicated to the definition and study of a tropical reduction datum of a pair $(X, \xi)$.
The lifting theorem is started and proved is Section \ref{sec lift}.
The proof consists of two parts: construction of local liftings and a patching argument.
The main ingredients of the first part are the canonical cover construction for algebraic curves of \cite{BCGGM16} and an equivariant analogue of the local lifting of \cite{TT20}.
For the patching argument the main tool is the existence of a good coordinate of an annulus with respect to a $q$-differential form.
Section \ref{sec good coor} is dedicated to the definition and proof of existence of a good coordinate.

\subsection{Notation and convention} \label{sec notation}
We follow the notation and conventions of \cite{TT20}.
Throughout $k$ denotes a fixed algebraically closed complete non-archimedean field, its valuation is written in additive notation as $\nu \colon  k \to  \bbR\cup \{\infty\}$ and in multiplicative notation as $|\phantom{m}| \colon  k \to  \bbR$ (with $|c|= 10^{-\nu(c)}$), $\kcirc$ is the ring of integers, $\kcirccirc$ its maximal ideal, and $\tilk=\kcirc/\kcirccirc$ is the residue field.
It is assumed that $\chark=\cha(\tilk)=0$.

By a \emph{nice} $k$-analytic curve we mean a quasi-smooth, connected, compact, separated, strictly $k$-analytic curve.
By a \emph{star-shaped} curve we mean a pair $(X, x)$ where $X$ is a nice $k$-analytic curve, $x \in X$ a point of type 2, and $X \setminus \{x\}$ a disjoint union of open discs and semi-open annuli.

Let $q$ be a natural number.
Abusing language, by a \emph{q-differential form} or a \emph{q-differential} on a curve $X$ we mean a section of $\left(\Omega^1_X\right)^{\otimes q}$ the $q$-th power of the sheaf of differential form of $X$.

A branch of $X$ at $x$ is an equivalence class of germs of intervals $[x, y] \subset X$ and the set of all branches is denoted $\Br(x)$.
In particular, there is one branch at a point of type 1, and for a point $y$ of type 2 there is a one-to-one correspondence between the branches $e \in \Br(y)$ and the $\tilk$-points $p_e$ of the reduction curve $C_y$.

The tropical curves are skeletons of analytic curves with divisors in the sense of \cite{TT20}.
We denote the set of vertices of type $i$ by $V_i(\Gamma)$ so $V(\Gamma)=V_1(\Gamma)\cup V_2(\Gamma)$ is the set of all vertices.
The edges adjacent to the vertices of type 1 are called \emph{legs}, and have infinite length.
Other edges are \emph{bounded}.
All edges of tropical curves are \emph{oriented}, and each bounded edge (even a loop) is considered twice by equipping it with the two possible orientations.
The legs are always oriented towards the vertex of type 1.
By abuse of notation the set of oriented edges and legs is denoted by $E(\Gamma)$, and the set of legs by $L(\Gamma)$.
If $e \in E(\Gamma)$ is bounded then $e^\opp$ denotes the same edge with the opposite orientation.

The tail and the head functions of oriented edges are denoted as $\gtt\colon  E(\Gamma) \to V (\Gamma)$ and $\gth\colon E(\Gamma) \to V (\Gamma)$ respectively.
If $\Gamma$ is a tropical curve and $x \in V_2(\Gamma)$ then $\Star(x)$ denotes the set of oriented edges $e\in E(\Gamma)$ such that $\gtt(e)=x$.
We shall not distinguish between tropical curves and their geometric realizations as metric graphs.

Given $0<r<1$ we denote by $\calA[r,1]$ (resp.\ $\calA(r,1)$) the closed (resp.\ open) annulus of modulus $0 < r < 1$ whose skeleton is the segment $[r,1]$ (resp.\ $(r,1)$).

\section{Tropical Reduction}  \label{sec trop_red}
\subsection{Outline}
Let $X$ be a nice $k$-analytic curve equipped with a non-zero meromorphic $q$-differential form $\xi$ and a compatible skeleton $\Gamma$ (see below).
Generalizing \cite{TT20}, we will associate to $(X,\Gamma,\xi)$ a tropical reduction $(\calC, \tilxi^{\gr},\gtR_{\xi}^q)$ consisting of a metrized curve complex with boundary $\calC =(\Gamma,\{C_x\}_{x\in V_2(\Gamma)},\{p_e\}_{e \in E (\Gamma) } ) $,  a collection of graded elements $\tilxi^{\gr}_x \in \Omega_{\tilk(C_x)/\tilk}^{\otimes q} \otimes_{\tilk} k^{\gr}$ for $x \in V_2(\Gamma)$, and a function $\gtR^q_{\xi}\colon  E(\Gamma) \to k$ satisfying certain compatibility conditions.
The main novelty is the $q$-residue function $\gtR_{\xi}^q$ we will define below.
Note that \cite{TT20} studies the case of $q=1$.

\subsection{Skeleton of a pair}
Given $X$ and $\xi$ as above,
by a \emph{skeleton of $(X,\xi)$} we mean a skeleton $\Gamma$ of $X$ containing all zeros and poles of $\xi$.
 For the remainder of this section we fix a triple $(X,\Gamma,\xi)$.

\subsection{The metrized curve complex}
Exactly as in \cite{TT20}, we enrich the skeleton $\Gamma$ to a metrized curve complex with boundary by $\calC$ by attaching to each type 2 vertex $x$ its reduction $\tilk$-curve $C_x$, and for each edge $e\in\Star(x)$ marking the corresponding closed point $p_e\in C_x$.
For more details see \cite[\S~2.2]{TT20}.

\subsection{The level function}
Let $||\phantom {m}||$ be the K\"ahler norm on $\Omega_X^{\otimes q}$ as defined in \cite[\S~8]{Metrization}.
The induced level function $\ell_\xi\colon \Gamma \to \bbR$ is given by $\ell_\xi(x)=-\log_{10} ||\xi||_x$.
As in \cite{TT20}, the level function $\ell_\xi$ is piecewise $\nu(k^\times)$-integral, affine and continuous (by \cite[Theorem~8.1.6]{Metrization}).

\subsubsection{Graded reduction, scaled reduction and level function}\label{sec graded and scaled red}
Given $x \in X$ we can associate two reductions to a section $\eta\in \Omega_{X,x}^{\otimes q}$.
Choosing $c\in k$ such that $|c|=||\eta||_x$ the (usual) reduction of $c^{-1}\eta$ lies in
 $\left(\Omega_{X,x}^{\otimes q}\right)^{\circ}/\left(\Omega_{X,x}^{\otimes q}\right)^{\circcirc}=\Omega_{\tilk(C_x)/ \tilk}^{\otimes q} $. 
Choosing a different scaling factor $c'$ the resulting reduction differs by an element of $k^\times$.
The \emph{scaled reduction} of $\eta$ at the point $x$ is the well-defined class $\tileta_x\in \left(\Omega_{\tilk(C_x)/ \tilk}\right)^{\otimes q}/\tilk^\times$ of forms modulo scaling.

On the other hand, denoting by $\left(\Omega_{X,x}^{\otimes q}\right)^{\le r}$ the subspace of sections $\eta\in \Omega_{X,x}^{\otimes q}$ for which $||\eta||_x \le r$,
then $\left(\Omega_{X,x}^{\otimes q}\right)^{\le r}$ defines a filtration on $\Omega_{X,x}^{\otimes q}$ and we denote
the associated graded $\tilk(C_x)$-module by $\left(\Omega_{x}^{\otimes q}\right)^\gr$.
Furthermore, each $0\neq c \in k$ induces an isomorphism between the $|c|$-graded part of $\left(\Omega_{x}^{\otimes q}\right)^\gr$ and $\left(\Omega_{X,x}^{\circ}\right)^{\otimes q}/\left(\Omega_{X,x}^{\circ\circ}\right)^{\otimes q}=\Omega_{\tilk(C)/ \tilk}^{\otimes q}$, and in fact, $\left(\Omega_{x}^{\otimes q}\right)^\gr$ is canonically isomorphic to $\Omega_{\tilk(C_x)/\tilk}^{\otimes q} \otimes_{\tilk} k^{\gr}$.
Each section $\eta\in \Omega_{X,x}^{\otimes q}$ induces a uniquely defined graded element $\tileta^\gr_x\in \left(\Omega_{x}^{\otimes q}\right)^\gr$ called the \emph{graded reduction} of $\eta$ at the point $x$.
The graded reduction $\tileta^\gr_x$ is in fact a representative of the scaled reduction $\tileta_x$ placed in the $||\eta||_x=10^{\ell_\eta(x)}$-graded part of $\left(\Omega_{x}^{\otimes q}\right)^\gr$.
Thus, the datum $\tileta^\gr_x$ is equivalent to the datum $(\tileta_x, \ell_\eta(x))$ and we switch between them freely.

\subsubsection{Slope}
For a point $x \in V_2(\Gamma)$ and $e\in \Star(x)$ we denote the slope of the level function $\ell_\xi$ along $e$ by $\frac{\partial \ell_\xi}{\partial e}$.

\subsubsection{Order}
Let $C$ be a $\tilk$-curve.
For $\eta=f(z)\cdot(dz)^q \in \Omega_{\tilk(C)/ \tilk}^{\otimes q}$ we denote
\[\ord_{p}(\eta)=\ord_{p} (f)\]
where $\ord_{p} (f)$ is the usual order of a function at a closed point $p \in C$ and $z=0$ at the point $p$.

\begin{prop} \label{prop slope}
For any $x \in V_2(\Gamma)$ and $e\in \Star(x)$ we have
\[ \frac{\partial \ell_\xi}{\partial e} =-\ord_{p_e}(\tilxi_x^\gr) -q .\]
\end{prop}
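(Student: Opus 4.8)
The plan is to localize the computation to a single branch and reduce it to the $q=1$ statement of \cite{TT20} applied to the coefficient function, the only genuinely new feature being a factor of $q$ coming from the $q$-th tensor power of $dz$. Fix $x\in V_2(\Gamma)$ and $e\in\Star(x)$. Since $p_e$ is a closed point of the reduction curve $C_x$, I would first choose a local coordinate $z$ at $x$ \emph{adapted to} $e$, meaning $|z|_x=1$ and the reduction $\widetilde z$ is a uniformizer of $C_x$ at $p_e$. A germ of $e$ at $x$ is then realized as the family of monomial points $x_s$ with $|z|_{x_s}=s$, where $s\to 1$ corresponds to approaching $x$ and $\widetilde z\to 0$ as one moves along $e$ into the corresponding residue domain (a disc or semi-open annulus $\calA(s_0,1)$); note $\widetilde z=0$ at $p_e$, as required by the definition of order. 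On such a neighborhood I would write $\xi=f\cdot(dz)^{\otimes q}$ with $f$ meromorphic, carrying a Laurent expansion $f=\sum_n a_n z^n$.

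Next I would compute the level function along this germ. By multiplicativity of the K\"ahler norm, $\|\xi\|_{x_s}=|f|_{x_s}\cdot\|dz\|_{x_s}^{\,q}$, and the key input from \cite[Theorem~8.1.6]{Metrization} is that on the skeleton $dz/z$ is the invariant form of norm $1$, whence $\|dz\|_{x_s}=|z|_{x_s}\cdot\|dz/z\|_{x_s}=s$, exactly as in the $q=1$ case. Taking $-\log_{10}$ yields the clean splitting
\[
\ell_\xi(x_s)=\bigl(-\log_{10}|f|_{x_s}\bigr)\;-\;q\log_{10}(s),
\]
so that the slope of $\ell_\xi$ along $e$ is the sum of the slope of $-\log_{10}|f|_{x_s}$ and $q$ times the slope of $-\log_{10}(s)$. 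The second summand is the source of the term $-q$ (for $q=1$ this is precisely the $-1$ of \cite{TT20}); it is where the passage from differentials to $q$-differentials enters.

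For the first summand I would invoke the standard Newton-polygon computation: $-\log_{10}|f|_{x_s}=\min_n\{\nu(a_n)-n\log_{10}(s)\}$ is piecewise affine in the length parameter, and its one-sided behaviour at $x$ is governed by the smallest exponent $n$ occurring in the reduction $\widetilde f$ of $f$, namely $\ord_{p_e}(\widetilde f)=\ord_{p_e}(\tilxi_x^{\gr})$ — here one uses that $\tilxi_x^{\gr}$ has coefficient $\widetilde f$ with respect to $(d\widetilde z)^{\otimes q}$ together with the definition of the order of a $q$-differential. Assembling the two contributions and orienting $e$ by the \cite{TT20} convention gives $\frac{\partial\ell_\xi}{\partial e}=-\ord_{p_e}(\tilxi_x^{\gr})-q$. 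I expect the points requiring care to be twofold: pinning down the value $\|dz\|_{x_s}=s$ of the K\"ahler norm on the skeleton, so that the $q$-th power produces exactly the summand $q$; and the sign bookkeeping — the orientation of $e$, the definition of the slope, and the sign in $\ell_\xi=-\log_{10}\|\xi\|$ must be tracked together so that the identification of the reduction order with the one-sided derivative comes out with the stated sign. As an internal check I would verify that summing the formula over the two orientations $e,e^{\opp}$ of a bounded edge, along which $\ell_\xi$ is affine, recovers the expected node relation $\ord_{p_e}(\tilxi_x^{\gr})+\ord_{p_{e^{\opp}}}(\tilxi_{x'}^{\gr})=-2q$.
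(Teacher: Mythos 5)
Your proposal is correct in substance and reaches the right formula, but it takes a genuinely different route from the paper. The paper's proof is a short, citation-based argument: by \cite[Lemma~3.3.2]{BT20}, the quantity $-\frac{\partial \ell_\xi}{\partial e}$ equals the induced order of the reduction $\eta=\wt{c\xi}$ in the reduction of $\left(\Omega_{X_G,C_x}\right)^{\otimes q}$, which \cite[Theorem~3.3.4]{BT20} identifies with $\bigl(\Omega^{\log}_{\tilk(C_x)/\tilk}\bigr)^{\otimes q}$; the summand $q$ then appears purely algebraically, because the log order of $f\,dt$ is $\ord_{p_e}(f)+1$ and these orders add under tensor powers. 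You instead recompute the slope from first principles: an adapted coordinate $z$ with $\widetilde{z}$ a uniformizer at $p_e$, multiplicativity of the K\"ahler norm on the rank-one module $\Omega^{\otimes q}$, the identity $\|dz\|_{x_s}=s$ along the skeleton, and the Newton-polygon description of $|f|_{x_s}$. These are two faces of the same phenomenon: your $\|dz\|_{x_s}=s$ (equivalently $\|dz/z\|_{x_s}=1$) is the metric counterpart of the paper's statement that $dt/t$ generates $\Omega^{\log}$ in order zero. Your route buys a self-contained, transparent proof that avoids \cite{BT20} altogether; the paper's route buys brevity and outsources the piecewise-affineness and one-sided-slope bookkeeping to the machinery of \cite{BT20}.

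Two caveats. First, the facts $\|dz\|_{x_s}=s$ and $\|f\,(dz)^{\otimes q}\|_{x_s}=|f|_{x_s}\|dz\|_{x_s}^{q}$ are not \cite[Theorem~8.1.6]{Metrization} (that is the piecewise-linearity statement); they are the standard computation of the K\"ahler seminorm on discs and annuli and should be cited as such. Second, and more importantly, the sign issue you flag is real and is not actually resolved in your write-up: with $\ell_\xi=-\log_{10}\|\xi\|$ and the naive convention that $\frac{\partial}{\partial e}$ is the arc-length derivative pointing away from $x$, your two summands come out as $+\ord_{p_e}(\widetilde{f})$ and $+q$ (moving along $e$ means $s$ decreases, so $-q\log_{10}(s)$ \emph{increases}), i.e.\ the negative of the stated formula. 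The convention under which the Proposition is literally true --- used throughout the paper and in \cite{TT20}, and visible in Lemma \ref{lem q-form is q power annulus}, where the dominant exponent satisfies $n=-\frac{\partial\ell_\xi}{\partial e}$ ``due to the induced orientation'' --- is that slopes along a branch are taken with respect to $\log_{10}$ of the annulus coordinate, which decreases along the branch. Making that convention explicit is exactly what is needed to turn your sketch into a complete proof; with it, your assembly yields $-\ord_{p_e}(\tilxi_x^\gr)-q$ as stated, and your closing consistency check (the node relation summing to $-2q$) goes through.
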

\begin{proof}
Choose $c\in k$ such that $|c^{-1}|=||\xi||_x$ and set $\eta=\wt{c\xi}$ with reduction taken at $x$.
By \cite[Lemma~3.3.2]{BT20} $-\frac{\partial \ell_\xi}{\partial e}$ equals to the induced order of $\eta$ in the reduction of $\left(\Omega_{X_G,C_x}\right)^{\otimes q}$ at $x$ which is $\left(\Omega_{\tilk(C_x)/\tilk}^\log\right)^{\otimes q}$ by \cite[Theorem~3.3.4]{BT20} and the fact that tensor products commute with colimts.
As the induced order on $\Omega_{\tilk(C_x)/\tilk}^\log$ assigns $\ord_{p_e}(f)+1$ to the form $fdt$,
it follows that the induced order on $\left(\Omega_{\tilk(C_x)/\tilk}^\log\right)^{\otimes q}$ of $\eta$ is given by $\ord_{p_e}(\eta) +q$.
\end{proof}

\begin{cor}
For any $x \in V_2(\Gamma)$
\[\div(\tilxi^\gr_x)=\sum_{e\in \Star(x)} \left(-\frac{\partial \ell_\xi}{\partial e}-q\right)p_e\]
and if $x$ is not a boundary point then
\[ \sum_{e\in \Star(x)}\frac{\partial \ell_\xi}{\partial e}= q\left( 2-2g(C_x)-|\Star(x)| \right) . \]
\end{cor}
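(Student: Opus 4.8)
The plan is to derive the corollary directly from Proposition~\ref{prop slope}, which gives a pointwise identity $\frac{\partial \ell_\xi}{\partial e} = -\ord_{p_e}(\tilxi_x^\gr) - q$ for every $x \in V_2(\Gamma)$ and every $e \in \Star(x)$. First I would establish the divisor formula. Since the graded reduction $\tilxi_x^\gr$ is a representative of the scaled reduction placed in the appropriate graded part, its divisor on the curve $C_x$ is determined by the orders $\ord_{p}(\tilxi_x^\gr)$ at the marked points $p_e$. Rearranging the proposition gives $\ord_{p_e}(\tilxi_x^\gr) = -\frac{\partial \ell_\xi}{\partial e} - q$ for each $e \in \Star(x)$, and since these are precisely the marked points corresponding to the branches in $\Star(x)$, summing over them yields
\[
\div(\tilxi^\gr_x) = \sum_{e \in \Star(x)} \ord_{p_e}(\tilxi_x^\gr)\, p_e = \sum_{e\in \Star(x)} \left(-\frac{\partial \ell_\xi}{\partial e}-q\right)p_e.
\]

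For the second identity I would take the degree of this divisor. The key input is that the total degree of the divisor of a nonzero $q$-differential on a smooth proper curve $C_x$ equals $q \cdot \deg(K_{C_x}) = q(2g(C_x)-2)$, by the Riemann--Roch formula for the canonical bundle applied to its $q$-th tensor power. Here I must address that $\tilxi_x^\gr$ lives in $\Omega_{\tilk(C_x)/\tilk}^{\otimes q}$, so its divisor is a divisor of a meromorphic section of $(\Omega^1_{C_x})^{\otimes q}$ and therefore has degree $q(2g(C_x)-2)$. The hypothesis that $x$ is \emph{not} a boundary point is what guarantees the relevant section is nonzero (so that taking its divisor is legitimate) and that no contributions from the boundary structure interfere. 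Taking degrees on both sides of the divisor formula gives
\[
q(2g(C_x)-2) = \sum_{e\in \Star(x)}\left(-\frac{\partial \ell_\xi}{\partial e}-q\right) = -\sum_{e\in \Star(x)}\frac{\partial \ell_\xi}{\partial e} - q\,|\Star(x)|.
\]

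Solving this for $\sum_{e\in \Star(x)}\frac{\partial \ell_\xi}{\partial e}$ rearranges immediately to
\[
\sum_{e\in \Star(x)}\frac{\partial \ell_\xi}{\partial e} = -q(2g(C_x)-2) - q\,|\Star(x)| = q\left(2 - 2g(C_x) - |\Star(x)|\right),
\]
which is the claimed formula. The main obstacle I anticipate is the justification that the degree of the divisor equals $q(2g(C_x)-2)$: one must verify that $\tilxi_x^\gr$ genuinely represents a nonzero meromorphic $q$-differential on the proper curve $C_x$ whose divisor captures \emph{all} the zeros and poles, with no extra contribution away from the marked points $p_e$. This is where the nonboundary hypothesis and the careful setup of the graded reduction in \S\ref{sec graded and scaled red} do the work, ensuring the scaled reduction is a well-defined nonzero form and that Proposition~\ref{prop slope} accounts for the order at every relevant point.
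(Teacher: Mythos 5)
Your proof is correct and is essentially the argument the paper intends (the corollary is stated without proof precisely because it is immediate from Proposition~\ref{prop slope}): rearrange the slope formula to read off $\ord_{p_e}(\tilxi^\gr_x)$, note that $\div(\tilxi^\gr_x)$ is supported on the marked points because $\Gamma$ contains all zeros and poles of $\xi$ (so branches outside $\Star(x)$ lead into discs where $\xi$ has no zeros or poles and the reduction has order zero), and then take degrees using $\deg\div(\tilxi^\gr_x)=q\left(2g(C_x)-2\right)$. One small correction of emphasis: the non-boundary hypothesis is not what makes $\tilxi^\gr_x$ nonzero (the graded reduction of a nonzero form is always nonzero); its actual role is that for non-boundary $x$ the reduction curve $C_x$ is proper, so the degree formula for a $q$-canonical divisor applies, whereas at a boundary vertex $C_x$ is non-proper and the degree count fails.
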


\subsection{The $q$-residue function}
As in \cite{TT20} the last part of our tropical reduction datum is the \emph{q-residue function} $\gtR_\xi ^q\colon E(\Gamma) \to k$.
The definition follows the same steps as that of the residue function $\gtR_\omega$ in \cite{TT20}, that is we first define a $q$-residue $\Res^q_\calA(\xi)\in k$ of a $q$-differential form $\xi$ along an oriented annulus $\calA$.
Then we extend the definition of a $q$-residue to a $q$-differential form $\xi$ along a branch $e$ of a nice curve $X$, denoted $\Res^q_e(\xi)$.
Finally we define the $q$-residue function $\gtR_\xi ^q\colon E(\Gamma) \to k$ of $(X,\Gamma,\xi)$.
\begin{rem}
Recall that the \emph{induced orientation} of an annulus $\calA$ with skeleton $e$ is the one along which the coordinate is decreasing.
\end{rem}
\subsubsection{The $q$-residue along an annulus}
Let $\xi$ be a $q$-differential form without zeros and poles on an oriented analytic annulus $\calA$.
If there exists a differential form $\omega$ on $\calA$ such that $\xi=\omega^q$ then we set $\Res^q_\calA(\xi)$ to be the $q$-th power of $\Res_\calA(\omega)$ (defined in \cite[\S~2.4.1]{TT20}).
Else, we set  $\Res^q_\calA(\xi)=0$.

Note that $\Res^q_\calA(\xi)$ is well defined since if such an $\omega$ exists, then any other differential form $\omega'$ satisfying $\xi={\omega'}^q$ differs form $\omega$ by a multiple of a $q$-th root of unity.
It follows that  $\Res_\calA(\omega')$ differs form $\Res_\calA(\omega)$ by a multiple of a $q$-th root of unity as well so $\left(\Res_\calA(\omega')\right)^q=\left(\Res_\calA(\omega)\right)^q$.

\begin{rem}
Recall that for a differential form $\omega$ without zeros and poles on an oriented annulus $\calA$ with coordinate $t$, then $c=\Res_\calA(\omega)$ is the unique scalar for which the differential form $\omega-c\frac{dt}{t}$ is exact (see \cite[\S~2.4.1]{TT20}).
\end{rem}

\begin{lem} \label{lem q-form is q power annulus}
Let $\xi$ be a $q$-differential form $\xi$ with neither poles nor zeros on an annulus $\calA$.
Set $\ell(x)=\log ||\xi||_x$.
Then $\xi$ is a $q$ power of some differential form $\omega$ if and only if $q$ divides $\frac{\partial \ell_\xi}{\partial e}$ where $e$ is the skeleton of $\calA$.
\end{lem}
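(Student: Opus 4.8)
The plan is to trivialize the $q$-differential along the skeleton, turning the question into one about extracting a $q$-th root of an invertible analytic function, and then to read off the winding number of that function from the slope via Proposition~\ref{prop slope}.

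First I would fix a coordinate $t$ on $\calA$ and write $\xi = f\cdot(dt/t)^q$ for an analytic function $f$; since $\xi$ has neither zeros nor poles, $f$ is a unit on $\calA$. Any differential form can be written $\omega = g\cdot(dt/t)$, and if $\xi=\omega^q$ then $q\,\div(g)=\div(f)=0$, so $g$ is a unit as well. Hence $\xi=\omega^q$ for some $\omega$ if and only if $f=g^q$ for some invertible analytic $g$, and the lemma reduces to the equivalence that $f$ is a $q$-th power precisely when $q\mid\frac{\partial\ell_\xi}{\partial e}$.

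Next I would invoke the standard description of units on an annulus: $f=c\,t^n u$ with $c\in k^\times$, $n\in\bbZ$, and $u$ a $1$-unit (its sup-norm distance to $1$ is $<1$), where $n$ is the winding number and is additive under multiplication. I claim $f$ is a $q$-th power exactly when $q\mid n$. The scalar $c$ has a $q$-th root because $k$ is algebraically closed; the $1$-unit $u$ has an analytic $q$-th root because in residue characteristic zero the binomial series for $(1+(u-1))^{1/q}$ has coefficients in $\kcirc$ (as $q$ and every $m!$ are units) and therefore converges on $\calA$; and $t^n=(t^{n/q})^q$ once $q\mid n$. Conversely, if $f=g^q$ then comparing winding numbers gives $n=q\cdot(\text{winding number of }g)$, so $q\mid n$. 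This step is where the hypotheses $\cha(\tilk)=0$ and $k$ algebraically closed are essential.

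Finally I would match $n$ with the slope. At a type $2$ point $x$ of the skeleton, choosing $a$ with $|a|=|t|_x$ and $s=t/a$, the function $s$ reduces to a coordinate $\tilde s$ on $C_x=\mathbb{P}^1_{\tilk}$ and $dt/t=ds/s$ reduces to $d\tilde s/\tilde s$; since $\tilde u=1$, the scaled reduction is $\tilxi_x = \lambda\,\tilde s^{\,n-q}(d\tilde s)^q$ for some $\lambda\in\tilk^\times$, whence $\ord_{p_e}(\tilxi^\gr_x)=n-q$ for the branch $e$ towards $\tilde s=0$. Proposition~\ref{prop slope} then gives
\[
  \frac{\partial\ell_\xi}{\partial e}=-\ord_{p_e}(\tilxi^\gr_x)-q=-(n-q)-q=-n
\]
(the opposite branch yields $+n$), so $q\mid\frac{\partial\ell_\xi}{\partial e}$ if and only if $q\mid n$. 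Combined with the previous paragraph this is exactly the asserted equivalence. The one point demanding care is that the contribution of $(dt/t)^q$ to the slope is itself divisible by $q$ — equivalently that $\|dt/t\|_x$ is constant along the skeleton; the reduction computation above (or a direct application of Proposition~\ref{prop slope} to $(dt/t)^q$, whose reduction $(d\tilde s/\tilde s)^q$ has order $-q$ and hence slope $0$) is what guarantees this, and it is the main obstacle to a purely formal argument.
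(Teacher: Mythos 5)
Your proposal is correct and is essentially the paper's own argument: your decomposition $f = c\,t^n u$ into scalar, monomial and $1$-unit is precisely the paper's dominant-term factorization $\xi = a_n s^n\bigl(1+\sum_{i\neq n}\tfrac{a_i}{a_n}s^{i-n}\bigr)\bigl(\tfrac{ds}{s}\bigr)^q$, and both proofs then rest on the same two facts, namely that the $1$-unit and the scalar admit $q$-th roots (binomial series in residue characteristic zero, algebraic closedness of $k$), so that everything reduces to the divisibility $q\mid n$. The only divergence is how $n$ is matched with the slope: the paper reads $n=-\frac{\partial \ell_\xi}{\partial e}$ off directly from the fact that the dominant term governs $||\xi||_x$ along the skeleton, while you recover the same identity through Proposition~\ref{prop slope} and a reduction computation; this is a more roundabout but valid bookkeeping of the same fact, and in any case the sign ambiguity you flag is immaterial since only divisibility by $q$ is at stake.
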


\begin{proof}
It is enough to show for a closed annulus.
Choosing a coordinate $s$ of $\calA$ we can write $\xi=\sum_i a_is^i (\frac{ds}{s})^q$.
Since $\xi$ has neither poles nor zeros it has a dominant term, i.e.\ there exists $n\in \bbZ$ such that $|a_is^i|_x< |a_ns^n|_x$ for all $i\neq n$ and all $x\in \calA$ (see \cite[\S~4.1.2]{BT20}).
We write
\[\xi =a_ns^n\left( 1+\sum_{i\neq n}\frac{a_i}{a_n}s^{i-n} \right)\left( \frac{ds}{s} \right)^q.\]
Note that $1+\sum_{i\neq n}\frac{a_i}{a_n}s^{i-n}$ is a $q$ power.
It follows that $\xi$ is a $q$ power of some differential form $\omega$ if and only if $q$ divides $n=-\frac{\partial \ell_\xi}{\partial e}$ (the minus sign is due to the induced orientation).
\end{proof}

\subsubsection{The $q$-residue along a branch}
Given $(X,\xi)$, for a point $x$ of type 2 and $e\in \Br(x)$ with $\gtt(e)=x$ there is an open annulus $\calA \subset X$ such that its skeleton $e_\calA$ lies along $e$ and $\xi|_\calA$ has neither zeros nor poles on the interior of $e_\calA$.
We define the $q$-residue of $\xi$ along $e$ to be the $q$-residue of $\xi|_\calA$ along  $\calA$ and write $\Res^q_e(\xi)=\Res^q_\calA(\xi|_\calA)$.

\subsubsection{The $q$-residue function $\gtR^q_\xi$}
Finally, for $(X,\Gamma,\xi)$ for each edge $e\in E(\Gamma)$ we set $\gtR^q_\xi(e)=\Res^q_e(\xi)$.

\begin{prop} \label{prop_properties of residue function}
Given $(X,\Gamma,\xi)$ as above
\begin{enumerate}
	\item $\gtR_\xi^q(e)=(-1)^q\gtR_\xi^q(e^\opp)$ for any bounded edge $e$.
	\item $\gtR_\xi^q(l)=\res_{x_l}^q (\xi)$ for any leg $l$ adjacent to a point $x_l$ of type 1.
	\item $\wt{\gtR_\xi^q(e)}^\gr=\res_{p_e}^q (\wt{\xi}_x^\gr)$ for any vertex $x \in \Gamma$ of type 2 and any $e \in \Star(x)$.
\end{enumerate}
Where $\res_{p_e}^q$ in (3) denotes the $q$-residue at the point $p_e\in C_x$ in the sense of \cite[\S~3.1]{BCGGM16}.
\end{prop}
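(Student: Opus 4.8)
The plan is to deduce all three statements from the corresponding $q=1$ assertions of \cite{TT20} by means of the dichotomy furnished by Lemma~\ref{lem q-form is q power annulus}. For each oriented edge or leg $e$ I choose, as in the definition of $\Res^q_e$, an annulus $\calA$ whose skeleton lies along $e$ and on which $\xi$ has neither zeros nor poles. By Lemma~\ref{lem q-form is q power annulus} there are two cases: either $q\mid\frac{\partial\ell_\xi}{\partial e}$, in which case $\xi|_\calA=\omega^q$ for a genuine differential $\omega$ on $\calA$ and $\gtR_\xi^q(e)=(\Res_\calA\omega)^q$ by definition; or $q\nmid\frac{\partial\ell_\xi}{\partial e}$, in which case $\gtR_\xi^q(e)=0$. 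Since divisibility of the slope by $q$ is insensitive to the orientation of $e$, the two cases are stable under all the operations appearing in the statement, so I treat them separately throughout.

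For part (1), if $q\nmid\frac{\partial\ell_\xi}{\partial e}$ both sides vanish; otherwise $\gtR_\xi^q(e^\opp)=(\Res_{e^\opp}\omega)^q=(-\Res_e\omega)^q=(-1)^q(\Res_e\omega)^q$, using that reversing the induced orientation negates the $q=1$ residue (\cite{TT20}), and this rearranges to the claimed identity because $((-1)^q)^2=1$. For part (2), with $x_l$ of type $1$ and $\calA$ a semi-open annulus abutting $x_l$: in the non-$q$-th-power case $\xi$ is not a local $q$-th power near $x_l$, so $\res_{x_l}^q(\xi)=0$ by the definition of the $q$-residue of \cite{BCGGM16}, matching $\gtR_\xi^q(l)=0$; in the $q$-th-power case $\gtR_\xi^q(l)=(\Res_\calA\omega)^q=(\res_{x_l}\omega)^q$ by the $q=1$ comparison of \cite{TT20}, while $\res_{x_l}^q(\xi)=(\res_{x_l}\omega)^q$ directly from the definition of \cite{BCGGM16} applied to $\xi=\omega^q$ near $x_l$.

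Part (3) is the reduction statement and additionally uses the order computation of Proposition~\ref{prop slope}, by which the pole order of $\wt{\xi}_x^{\gr}$ at $p_e$ is $\frac{\partial\ell_\xi}{\partial e}+q$. If $q\nmid\frac{\partial\ell_\xi}{\partial e}$ then this order is not divisible by $q$, hence $\res_{p_e}^q(\wt{\xi}_x^{\gr})=0$, while $\gtR_\xi^q(e)=0$ and so $\wt{\gtR_\xi^q(e)}^{\gr}=0$. If $q\mid\frac{\partial\ell_\xi}{\partial e}$, write $\xi|_\calA=\omega^q$; since the K\"ahler norm is multiplicative, so is the graded reduction, giving $\wt{\xi}_x^{\gr}=(\wt{\omega}_x^{\gr})^q$ in $\Omega^{\otimes q}_{\tilk(C_x)/\tilk}\otimes_{\tilk}k^{\gr}$. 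Then $\wt{\gtR_\xi^q(e)}^{\gr}=\wt{(\Res_\calA\omega)^q}^{\gr}=(\wt{\Res_\calA\omega}^{\gr})^q=(\res_{p_e}\wt{\omega}_x^{\gr})^q$ by multiplicativity of graded reduction together with the $q=1$ reduction statement of \cite{TT20}, whereas $\res_{p_e}^q(\wt{\xi}_x^{\gr})=\res_{p_e}^q\!\big((\wt{\omega}_x^{\gr})^q\big)=(\res_{p_e}\wt{\omega}_x^{\gr})^q$ by the definition of the $q$-residue on $C_x$; comparing the two gives the claim.

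The main obstacle is bookkeeping at the interface between the two residue formalisms rather than any single hard computation: I must verify that our $\Res^q$/$\gtR^q$ agrees with the $q$-residue of \cite{BCGGM16} precisely to the extent used above, namely that it equals $(\res\omega)^q$ on local $q$-th powers and vanishes when the pole order is not divisible by $q$, and I must keep the orientation conventions consistent, in particular the matching of the induced orientation of $\calA$ with the orientation of $e\in\Star(x)$ and the sign bookkeeping underlying the factor $(-1)^q$ in (1). The multiplicativity of the graded reduction with respect to tensor powers, used crucially in (3) to pass through $\wt{\xi}_x^{\gr}=(\wt{\omega}_x^{\gr})^q$, should be recorded explicitly, since it is exactly what licenses extracting the $q$-th power before reducing.
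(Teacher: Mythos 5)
Your treatment of parts (1) and (2) is correct and is essentially the paper's own (very terse) argument: the paper's proof just invokes the definition of $\Res^q$ together with the $q=1$ statement \cite[Proposition~4.2.5]{TT20}, and your case split via Lemma~\ref{lem q-form is q power annulus}, plus the two interface facts about the $q$-residue of \cite{BCGGM16} (it vanishes when the form is not locally a $q$-th power, and equals $(\res\omega)^q$ on local $q$-th powers), is exactly what that invocation amounts to. In (2) you do implicitly treat $\omega$ as defined near $x_l$ although it was produced only on the annulus $\calA$; this is harmless there, because $q\mid\ord_{x_l}(\xi)$ at the type~1 point $x_l$ gives a $q$-th root of $\xi$ on a whole punctured disc around $x_l$, which agrees with $\omega$ on a sub-annulus up to a $q$-th root of unity, an ambiguity killed by raising to the $q$-th power.

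Part (3), however, has a genuine gap. The pivotal identity $\wt{\xi}_x^{\gr}=(\wt{\omega}_x^{\gr})^q$ in $\Omega^{\otimes q}_{\tilk(C_x)/\tilk}\otimes_{\tilk}k^{\gr}$ is not meaningful as written: $\omega$ exists only on the annulus $\calA$ along the single branch $e$, not on a neighbourhood of $x$, so it has no graded reduction at $x$ and determines no rational differential form on $C_x$. This is not a removable technicality. The hypothesis $q\mid\frac{\partial\ell_\xi}{\partial e}$ concerns one branch only; in general $\xi$ is not a $q$-th power on any neighbourhood of $x$, and correspondingly $\wt{\xi}_x^{\gr}$ is not a $q$-th power in $\Omega_{\tilk(C_x)/\tilk}\otimes_{\tilk}k^{\gr}$ --- for instance whenever some other slope $\frac{\partial\ell_\xi}{\partial e'}$, $e'\in\Star(x)$, is not divisible by $q$ (by Proposition~\ref{prop slope} some order of $\wt{\xi}_x^{\gr}$ is then not divisible by $q$). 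This is precisely the $d>1$ situation that forces the canonical cover construction in Section~\ref{sec lift}. For the same reason you cannot literally apply \cite[Proposition~4.2.5(3)]{TT20} to $\omega$: that statement presupposes a form defined on the curve near $x$. What your argument actually needs is the local statement at $p_e$: the reduction of $\omega$ along the branch $e$ is only a germ (a formal differential at $p_e$), reduction along a branch is multiplicative, and the residue comparison holds for such germs. The clean way to obtain this --- and, in effect, what the paper's own machinery supplies --- is Proposition~\ref{prop good coor}: choose a good coordinate $t$ on $\calA$, so that $\xi|_\calA=(c_nt^{n/q}+c_0)^q(\frac{dt}{t})^q$ or $\xi|_\calA=c_n^qt^n(\frac{dt}{t})^q$, and check that $t$ can be taken so that $\wt{t}$ is a standard coordinate for $\wt{\xi}_x^{\gr}$ at $p_e$ in the sense of \cite[\S~3.1]{BCGGM16}; then $\gtR^q_\xi(e)$ and $\res^q_{p_e}(\wt{\xi}_x^{\gr})$ are read off from the same normal form (both equal to the reduction class of $c_0^q$, or both zero), with Proposition~\ref{prop slope} matching the exponents. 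With that substitution your outline of (3) becomes sound; as written, the step fails.
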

\begin{proof}
The claims follow from the definition and \cite[Proposition~4.2.5]{TT20}.
\end{proof}

\subsection{$q$-harmonicity}

    First we recall the definition of $P_{s,d}$ from \cite[(1.1)]{BCGGM16}
\begin{defin}
For natural numbers $s,d$ and $R_1\. R_s\in k$ set
\[P_{s,d}(R_1\. R_s)=\prod_{\{(r_1\. r_s)\in k^s \mid r_i^d=R_i\}} \sum_{i=1}^s r_i .\]
\end{defin}
\begin{rem}
(1) As $P_{s,d}$ is symmetric with respect to the $d$-th roots of the $R_i$'s it depends only on the $R_i$'s.

(2) $P_{s,d}=0$ if and only if $ \sum_{i=1}^s r_i =0$ for some $s$-tuple $(r_1\. r_s)$ such that $r_i^q=R_i$ for all $i$.
\end{rem}

\begin{prop} \label{prop q-harmonicity}
Let $x \in V_2(\Gamma)$ be a point not in the boundary with $\Star(x)=\{e_1\. e_n\}$.
If $\xi$ is a $q$-power of a differential form $\omega$ on a neighbourhood of $x$ then
\[ P_{n,q}(\gtR^q_\xi(e_1)\. \gtR^q_\xi(e_n))=0.\]
\end{prop}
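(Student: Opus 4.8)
The plan is to reduce the statement to the simpler case where $\xi = \omega^q$ with $\omega$ an actual differential form, invoke the residue theorem for $\omega$, and then relate the residues of $\omega$ to the $q$-residues $\gtR^q_\xi(e_i)$. Since by hypothesis $\xi = \omega^q$ on a neighborhood of $x$, the first step is to understand what $\Res^q_{e_i}(\xi)$ means in terms of the ordinary residues $\Res_{e_i}(\omega)$. By the definition of the $q$-residue along an annulus, on each branch $e_i$ we have $\gtR^q_\xi(e_i) = \left(\Res_{e_i}(\omega)\right)^q$, since $\xi|_{\calA_i} = \omega^q$ exhibits $\xi$ as a $q$-th power on a small annulus along $e_i$. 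Write $c_i = \Res_{e_i}(\omega)$, so that $\gtR^q_\xi(e_i) = c_i^q = R_i$ in the notation of $P_{n,q}$.

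**Applying the residue theorem.** Next I would apply the residue compatibility for $\omega$ at the point $x$. The key fact, which is the $q=1$ content of the analogous statement in \cite{TT20} (and ultimately the residue theorem on the reduction curve $C_x$), is that the ordinary residues of $\omega$ around $x$ sum to zero: $\sum_{i=1}^n \Res_{e_i}(\omega) = 0$, i.e.\ $\sum_{i=1}^n c_i = 0$. One must be slightly careful here: the residue theorem on $C_x$ concerns the residues of the scaled/graded reduction $\widetilde{\omega}^\gr_x$, and these match the $\Res_{e_i}(\omega)$ only in the balanced (same-level) part of the star. The clean way to phrase it is that $\sum_i c_i = 0$ follows because $\omega$ being an honest differential form on a neighborhood of the type-$2$ point $x$ forces its residues along the branches to sum to zero by the residue theorem for $1$-forms, exactly as in \cite{TT20}.

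**Matching with $P_{n,q}$.** The final step connects the vanishing $\sum_i c_i = 0$ to the vanishing of $P_{n,q}(R_1, \dots, R_n)$. By definition,
\[
P_{n,q}(R_1, \dots, R_n) = \prod_{\{(r_1,\dots,r_n) : r_i^q = R_i\}} \sum_{i=1}^n r_i,
\]
and by Remark~(2) following the definition, $P_{n,q} = 0$ precisely when there exists some choice of $q$-th roots $r_i$ with $r_i^q = R_i$ such that $\sum_i r_i = 0$. Our tuple $(c_1, \dots, c_n)$ is exactly such a choice: each $c_i$ satisfies $c_i^q = R_i = \gtR^q_\xi(e_i)$, and $\sum_i c_i = 0$. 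Hence one factor in the product defining $P_{n,q}$ vanishes, so $P_{n,q}(\gtR^q_\xi(e_1), \dots, \gtR^q_\xi(e_n)) = 0$, as desired.

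**Expected main obstacle.** I expect the subtle point to be the second step — establishing $\sum_i c_i = 0$ cleanly. The residue-sum vanishing on the reduction curve is naturally a statement about residues of the graded reduction $\widetilde{\omega}^\gr_x$ living in the appropriate graded pieces, and one must confirm that the $\Res_{e_i}(\omega)$ assemble into data to which the residue theorem genuinely applies, rather than into residues at several different levels that need not balance. The honest-$1$-form hypothesis (that $\omega$, not merely $\xi$, exists on a full neighborhood of $x$) is what rescues this, and spelling out why it guarantees a single coherent residue relation is the heart of the argument; everything else is the formal root-counting of Remark~(2).
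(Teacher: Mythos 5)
Your proposal is correct and follows essentially the same route as the paper's proof: identify $\gtR^q_\xi(e_i)=\left(\Res_{e_i}(\omega)\right)^q$, invoke the residue-sum vanishing $\sum_i \Res_{e_i}(\omega)=0$ for the honest $1$-form $\omega$ (this is exactly \cite[Theorem~3.1.1]{TT20}, which the paper cites), and conclude via Remark~(2) that one factor of $P_{n,q}$ vanishes. Your ``expected obstacle'' about level-matching is handled precisely by that theorem of \cite{TT20}, whose residue function takes values in $k$ and is harmonic at inner vertices, so no further argument is needed.
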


\begin{proof}
By construction, $\gtR^q_\xi(e)=\left(\gtR_\omega(e)\right)^q=\left(\Res_e(\omega)\right)^q$ for every $e\in \Star(x)$.
The claim follows from (2) of the Remark above and \cite[Theorem~3.1.1]{TT20}.
\end{proof}

\begin{rem}\label{rem q-harmonicity conditions}
For a point $x \in V_2(\Gamma)$ not in the boundary the assumption of the last Proposition is satisfied exactly when $q$ divides $\frac{\partial \ell_\xi}{\partial e}$ for all $e\in \Star(x)$
by Lemma \ref{lem q-form is q power annulus}.

Furthermore, denoting $\Star(x)=\{e_1\.e_n\}$ and $p = \{p_1\.p_n\}\subset C_x$ the closed points corresponding to $\Star(x)$
we have $\div(\xi)=\sum_{i=1}^nm_ip_i$ for some $m_i\in\bbZ$.
Set
\[ \tag{$\star$}\begin{aligned}
l_i=&\gcd(q,m_i)   &  l&=\gcd(l_1\.l_n) \\
d_i&=\frac{q}{l_i}  &  d=&\frac{q}{l}=\lcm(d_1\.d_n).
\end{aligned} \]
Then $\xi$ is a $q$-power of a differential form $\omega$ on a neighbourhood of $x$ if and only if all $m_i$ are divisible by $q$ that is $d=1$.
\end{rem}

\subsection{The tropical reduction datum}
Let $\gamma = \left( \calC, \tileta^\gr,\gtR^q\right)$ be a triple consisting of a metrized curve complex with boundary
$\calC = \left(\Gamma, (c_x)_{x\in V_2(\Gamma)},(p_e)_{e\in E(\Gamma)}\right)$,
a collection of graded elements $\tileta^\gr_x\in \left( \Omega_{\tilk(C_x)/\tilk}\right)^{\otimes q} \otimes_\tilk \tilk^\gr$
and a function $\gtR^q\colon E(\Gamma) \to k$.
Like in \cite{TT20}, there is a unique continuous, piecewise affine function $\ell_\gamma \colon \Gamma \to \bbR$ such that (i) $\ell_\gamma$ is affine on the edges of $\Gamma$, (ii) $10^{\ell_\gamma(x)}$ is the grading of $\tileta^\gr_x$ for all $x\in V_2(\Gamma)$, and (iii) for any leg $l\in L(\Gamma)$, the slope of $\ell_\gamma$ along the leg is equal to $-q-\ord_{p_l}(\tileta^\gr_x)$ for $x=\gtt(l)$ the vertex in $V_2(\Gamma)$ adjacent to $l$.

\begin{defin} \label{def red datum}
A triple $\gamma$ is called a \emph{tropical reduction datum} if the following compatibilities hold:
\begin{enumerate}
	\item $\frac{\partial \ell_\gamma}{\partial e}=-q-\ord_{p_e}(\tileta^\gr_x)$ for any $e\in E(\Gamma)$ with $\gtt(e)=x$,
	\item $\wt{\gtR(e)}^\gr=\res_{p_e}^q (\tileta_x^\gr)$ for any $e\in E(\Gamma)$ with $\gtt(e)=x$,
	\item if $x\in V_2(\Gamma)$ is not in the boundary with $\Star(x)=\{e_1\. e_n\}$ and $q$ divides every $\frac{\partial \ell_\gamma}{\partial e_i}$ then $ P_{n,q}(\gtR^q(e_1)\. \gtR^q(e_n))=0$,
	\item if $q$ does not divide $\frac{\partial \ell_\gamma}{\partial e}$ for $e\in E(\Gamma)$ then $\gtR^q(e)=0$,
	\item  if $\frac{\partial \ell_\gamma}{\partial l}<0$ for $l\in L(\Gamma)$ then $\gtR^q(l)=0$.
\end{enumerate}
\end{defin}

\begin{theor}
Let $\left(X,\Gamma,\xi\right)$ be a nice $k$-analytic curve equipped with a non-zero meromorphic $q$-differential form $\xi$ and a compatible skeleton $\Gamma$ (i.e.\  $\div(\xi)\subset \Gamma $). Let $\gamma \left(X,\Gamma,\xi\right)$ be the triple consisting of the metrized
curve complex with boundary $\calC$ associated to $\left(X, \Gamma\right)$, the graded reductions $\tilxi_x^\gr$ for each $x\in V_2(\Gamma)$, and the $q$-residue function $\gtR^q_\xi$. Then $\gamma \left(X,\Gamma,\xi\right)$ is a tropical reduction datum.
\end{theor}

\begin{proof}
This is the content of Propositions \ref{prop slope}, \ref{prop_properties of residue function} and \ref{prop q-harmonicity}.
\end{proof}

\section{Lifting a tropical reduction datum} \label{sec lift}
\subsection{Star-shaped objects} \label{sec star-shape}
Recall that a star-shaped curve is a pair $(X, x)$ where $X$ is a nice $k$-analytic curve and $x \in X$ is a point of type 2 such that $X \setminus \{x\}$ is a disjoint union of open discs and semi-open annuli.
Similarly, by a \emph{star-shaped tropical curve} we mean a pair $(\Gamma, x)$ where $\Gamma$ is a tropical curve and $x$ a point of type 2 such that $\Gamma \setminus \{x\}$ is a disjoint union of open and semi-open line segments.
By a \emph{star-shaped tropical reduction datum} $(\gamma,x)$ or \emph{star-shaped metrized curve complex} $(\calC,x)$ we mean a tropical reduction datum $\gamma$ or metrized curve complex $\calC$ with underlying tropical curve $Gamma$ and $x\in V_2(\Gamma)$ such that $(\Gamma,x)$ is star-shaped.

\subsection{The main theorem}
The remainder of this section is dedicated to proving the main result of this paper
\begin{theor} \label{theor lifting}
For any tropical reduction datum $\gamma = \left( \calC, \tilxi^\gr,\gtR^q\right)$, there exists a nice $k$-analytic curve $X$ equipped with a non-zero meromorphic $q$-differential form $\xi$ and a compatible skeleton $\Gamma$ such that the tropical reduction of $(X, \Gamma, \xi)$ is $\gamma$.
\end{theor}

Our strategy of proof is as follows.
We first prove the theorem for a star-shaped case.
That is, given a star-shaped tropical reduction datum $(\gamma,x)$ we construct a star-shaped curve $(X,x)$ with a star-shaped skeleton $(\Gamma,x)$ and a non-zero meromorphic $q$-differential form $\xi$ such that the tropical reduction of $(X, x, \Gamma , \xi)$ is the given star-shaped tropical reduction datum.
In order to achieve this we will use the canonical cover construction of \cite[\S~2.1]{BCGGM16} and the lifting theorem of \cite{TT20} to obtain a "cover" of the required star-shaped curve.
More precisely, we will construct a star-shaped curve and skeleton with a non-zero meromorphic ($1$-)differential form $(\hatX,\hatx, \hatGamma, \hatomega)$ equipped with a group action of $\mu_d$ for an appropriate $d$.
The quotient of $(\hatX,\hatx, \hatGamma, \hatomega^q)$ by the group action is the desired star-shaped lift.

For the general case, given a tropical reduction datum $\gamma = \left( \calC, \tilxi^\gr,\gtR^q\right)$ we break it into star-shaped tropical reduction data.
Then we patch together all the star-shaped curves and $q$-differential forms to obtain the desired $(X, \Gamma, \xi)$.

\subsection{Lifting an automorphism}
We first need to slightly extend the result \cite[Lemma~4.3.2]{TT20} to include also an automorphism.
\begin{lem} \label{lem lift of auto}
Let $C$ be a smooth proper curve over $\tilk$ and $p = \{p_1\.p_n\}\subset C$ a finite set of closed points.
Assume that $C$ is equipped with an automorphism $\tiltau\colon C \to C$ of order $d$ such that $p$ is $\tiltau$-invariant.
Then there exists a nice proper $k$-curve $Y$ with reduction $C$, a set of $k$-points $P = \{P_1\.P_n\}$ lifting $p$, an automorphism $\tau\colon Y \to Y$ of order $d$ that lifts $\tiltau$ and $P$ is $\tau$-invariant.
\end{lem}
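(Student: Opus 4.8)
The plan is to lift the data in stages, starting from the quotient curve $C/\langle \tiltau\rangle$ and then building the cyclic cover together with the automorphism on the analytic side. First I would form the quotient curve $C' = C/\langle \tiltau\rangle$, which is again a smooth proper $\tilk$-curve (the quotient of a smooth curve by a finite group in characteristic zero is smooth), together with the quotient map $\pi\colon C \to C'$. Since $p$ is $\tiltau$-invariant, its image $p' = \pi(p)\subset C'$ is a well-defined finite set of closed points, and I would record the branch data of $\pi$ over $p'$ and over the other branch points. The key observation is that the cover $\pi$ is classified by a finite amount of combinatorial and geometric data over $\tilk$: the quotient $C'$, the branch locus, and a tame cyclic covering datum (a choice of line bundle with a section trivializing $\mathcal{L}^{\otimes d}$ away from the branch divisor), all of which lift to characteristic-zero objects because $\cha(\tilk)=0$ guarantees tameness.

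Next I would apply \cite[Lemma~4.3.2]{TT20} (the lifting result this statement extends) to the quotient curve $C'$ together with the set of points $p'$ and the additional branch points, obtaining a nice proper $k$-curve $Y'$ with reduction $C'$ and a set of $k$-points lifting $p'$ and the branch locus. At this point I would reconstruct the cyclic cover $Y \to Y'$ over $k$ directly from the tame covering datum: because the residue characteristic is zero, the $\mu_d$-cover is determined by a line bundle $\mathcal{M}$ on $Y'$ with $\mathcal{M}^{\otimes d} \cong \mathcal{O}_{Y'}(B)$ for the lifted branch divisor $B$, and such a lift of the reduction datum exists since Picard groups and the relevant sections lift (the $d$-torsion line bundle defining $\pi$ lifts because reduction of line bundles is surjective on a curve with good reduction). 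The curve $Y = \Spec_{Y'}\bigl(\bigoplus_{i=0}^{d-1}\mathcal{M}^{-i}(\lfloor iB/d\rfloor)\bigr)$ carries a tautological $\mu_d \cong \mathbb{Z}/d$-action; I set $\tau$ to be a generator, which by construction has order exactly $d$, reduces to $\tiltau$, and permutes the fiber of $P = \pi_Y^{-1}(\text{lifted } p')$ so that $P$ is $\tau$-invariant. Taking $P$ to lift $p$ is possible because the $\tiltau$-orbit structure on $p$ is matched by the $\tau$-orbit structure on its preimage, both being governed by the same cyclic branch data.

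I expect the main obstacle to be ensuring that $\tau$ has order \emph{exactly} $d$ (not merely dividing $d$) and that it genuinely lifts $\tiltau$ rather than some other power or a twist. This requires checking that the $\mu_d$-action on the reduction $\widetilde{Y} = C$ recovers the original $\tiltau$, which amounts to matching the tame covering datum over $k$ with its reduction over $\tilk$ compatibly with the chosen generator of $\mu_d$; the residue-characteristic-zero hypothesis is essential here, as it forces the cover to be tame and hence rigidly determined by its reduction. A secondary point to verify carefully is that $Y$ is again \emph{nice} in the sense of \S\ref{sec notation} — quasi-smooth, connected, compact, separated, strictly $k$-analytic — which follows from $Y'$ being nice and $\pi_Y$ being a finite tame cover, but the connectedness of $Y$ should be checked directly from the irreducibility of the covering datum (equivalently, from $\tiltau$ acting with the full order $d$ so that $C$ is connected as a $\langle\tiltau\rangle$-cover of $C'$).
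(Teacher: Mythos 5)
Your proposal takes a genuinely different route from the paper, and the route you chose is incomplete at exactly its load-bearing steps. The paper's proof is a two-line base-change argument: since $k$ and $\tilk$ both have characteristic zero and $\kcirc$ is complete (hence Henselian), the reduction homomorphism $\kcirc \to \tilk$ admits a section $\tilk \hookrightarrow \kcirc$ (a coefficient field); one then takes $Y$, $P$ and $\tau$ to be the analytifications of $C \otimes_{\tilk} k$, $p \otimes_{\tilk} k$ and $\tiltau \otimes_{\tilk} k$. The order of $\tau$, the $\tau$-invariance of $P$, and the identification of the reduction with $C$ are all inherited from the algebraic picture, with nothing left to check. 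Your construction via the quotient $C' = C/\langle \tiltau \rangle$ and a lifted cyclic covering datum rebuilds by hand, through Kummer theory and Picard groups, exactly what this base change gives for free.

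Concretely, two steps in your argument are asserted rather than proved, and they carry the entire weight. First, lifting the covering datum is more than "reduction of line bundles is surjective": in the ramified case the classifying bundle $\mathcal{L}$ is not $d$-torsion (contrary to your wording); the datum is a pair consisting of $\mathcal{L}$ together with an isomorphism $\mathcal{L}^{\otimes d} \cong \mathcal{O}_{C'}(D)$, and to lift the pair you must correct an arbitrary lift $\mathcal{M}_0$ of $\mathcal{L}$ by a $d$-th root of the line bundle $\mathcal{M}_0^{\otimes d}(-B)$, i.e.\ you need the kernel of $\operatorname{Pic}(\mathcal{Y}') \to \operatorname{Pic}(C')$ to be $d$-divisible. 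That is provable in residue characteristic zero (binomial series on $1 + \kcirccirc\mathcal{O}$), but it is a real argument over the typically non-Noetherian ring $\kcirc$; moreover it presupposes a formal or algebraic model $\mathcal{Y}'$ on which $\operatorname{Pic}$, relative $\mathrm{Spec}$ and reduction interact as you claim, whereas \cite[Lemma~4.3.2]{TT20} hands you only an analytic curve. Second, the point you yourself flag as the main obstacle --- that the reduction of your cover is $C \to C'$ \emph{equivariantly}, so that $\tau$ reduces to $\tiltau$ itself and not to a power $\tiltau^a$ with $\gcd(a,d)=1$ --- is only gestured at; making it precise requires fixing a primitive $d$-th root of unity $\widetilde{\zeta} \in \tilk$, taking its unique Henselian lift $\zeta \in \kcirc$, and checking that the eigensheaf decompositions upstairs and downstairs correspond under reduction. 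None of these steps is false, but none is carried out, and together they constitute the whole content of the lemma --- all of which the coefficient-field trick renders unnecessary.
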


\begin{proof}
Choosing a section $\tilk \hookrightarrow \kcirc$ of the reduction homomorphism $ \kcirc \to \tilk$ we take our lifts $Y$, $P$ and $\tau$ to be the analytifications of $C\otimes_{\tilk}k$, $p\otimes_{\tilk}k$, and $C\otimes_{\tilk}k\to C\otimes_{\tilk}k$.
\end{proof}

\subsection{Lifting a star-shaped tropical reduction datum}
Fix a star-shaped tropical reduction datum $(\gamma,x) = \left( \calC, x, \tilxi^\gr,\gtR^q\right)$.

\subsubsection{Notation} \label{sec notation lift}
Set $\Star(x)=\{e_1\.e_n\}$ and $p = \{p_1\.p_n\}\subset C_x$ the closed points corresponding to $\Star(x)$.
We have $\div(\tilxi^\gr_x)=\sum_{i=1}^nm_ip_i$ for some $m_i\in\bbZ$.
We denote $l_i,l,d_i,d$ as in ($\star$).
Also set $R_i=\gtR^q(e_i)$.

\subsubsection{Canonical cover of an algebraic curve}
In the course of the poof of Theorem \ref{theor lifting} we will need to distinguish between two cases - whether or not the $q$-differential form is locally a $q$-power of some differential form.
The significance of the canonical cover construction of \cite{BCGGM16} is that it proves that for any $q$-differential form $\eta$ on a curve $C$ there exists a suitable cover $\hatC \to C$ such that the pullback of $\eta$ to $\hatC$ is in fact a $q$-power of some differential form.
We briefly recall the main properties of this construction.

Let $\eta$ be a $d$-differential form on an algebraic, integral, smooth $\tilk$-curve $C$.
Assume that $\eta$ is not a power of any $d'$-differential form on $C$ where $d'<d$.
Write $\div(\eta)=\sum_{i=1}^nm_ip_i$.
By the canonical cover construction \cite[\S~2.1]{BCGGM16} we obtain a flat cyclic $d$-cover $\pi\colon \hatC\to C$, a meromorphic differential form $\omega$ on $\hatC$ such that $\left(\omega\right)^d=\pi^*(\eta)$ and also a deck transformation $\tau$ of the cover of order $d$.
By construction , $\hatC$ is integral and normal and $\pi$ is \'etale over $C-\div(\eta)_\red$.

Furthermore
\begin{itemize}
	\item $\omega$ is unique up to multiplication with a power of $\zeta$.
	\item $\tau$ generates the deck transformation group and $\tau^*\omega=\zeta\omega$ for a primitive $d$-th root of unity $\zeta$.
	\item The map $\pi\colon \hatC\to C$ is a $\mu_d$-Galois branched cover.
	\item The branch locus  of $\pi$ is contained in $\{p_1\.p_n\}$.
	\item For each $i$ the fibre $\pi^{-1}(p_i)$ contains $\gcd(d,m_i)$ distinct points and the ramification index along $p_i$ is $\frac{d}{\gcd(d,m_i)}$.
\end{itemize}
For more details see \cite[\S~2.1]{BCGGM16} and \cite[\S~3]{EV92}.

\subsubsection{Proof of Theorem \ref{theor lifting} for the star-shaped case and $x$ not in the boundry}
We need to consider two cases: $d=1$ and $d>1$ where $d$ is defined in \S ~\ref{sec notation lift}.

Assume $d=1$.
In this case, by the $q$-harmonicity condition \ref{def red datum}(3),  we can choose $r_1\.r_n\in k$ such that $\sum_ir_i=0$ and $r_i^q=R_i$ for each $i$.
Furthermore, there exists a form $\tilomega$ on $C_x$ such that $\tilomega^q=\tilxi$ whose  residue at the point $p_i$ is $\tilr_i$ for all $i$.
Setting $\gtR(e_i)=r_i$ we obtain a function  $\gtR\colon \Gamma \to k$ that is a $q$-th root of the $q$-residue function $\gtR^q$.
Finally, placing $\tilomega$ at level $\frac{\ell(x)}{q}$ we have construced a star-shaped tropical reduction datum $\gamma'= \left( \calC, \tilomega^\gr,\gtR\right)$ in the sense of \cite{TT20} which can be lifted, according to \cite[Theorem~3.3.1]{TT20}, to a proper nice $k$-curve $\hatY$ equipped with a differential form $\omega$ lifting $\gamma'$.
Setting $\xi=\omega^q$ we obtain the desired lifting.

Assume $d>1$.
In this case $d$ is the minimal number such that there exists a $d$-form $\tileta$ on $C_x$ such that $\tilxi$ is a power of $\tileta$, specifically $\tileta^l=\tilxi$.
Then $\div(\tileta)=\sum_{i=1}^nm_i'p_i$ with $m'_i=\frac{m_i}{l}$.
By the canonical cover construction of \cite[\S~2.1]{BCGGM16} we obtain a flat cyclic $d$-cover $\pi\colon \hatC\to C_x$, a meromorphic differential form $\tilomega$ on $\hatC$ such that $\left(\tilomega\right)^d=\pi^*(\tileta)$ (hence $\left(\tilomega\right)^q=\pi^*(\tilxi)$) and a deck transformation $\tau$ of $\pi$ of order $d$.
Furthermore, for each $i$ there are
\[\gcd(d,m'_i)=\gcd(\frac{q}{l},\frac{m_i}{l})=l_i/l=d/d_i\]
points of $\hatC$ over $p_i$ and their ramification index is $d_i$.
Note that, the fibre $\pi^{-1}(p_i)$ contains exactly $d$ points if and only if $d_i=1$ if and only if $q|m_i$. 
By construction, $\tilomega$ is equivariant, that is $g(\tilomega)=\chi(g)\tilomega$, where $\chi$ is a primitive character of $G=\Aut(\hatC/C)\toisom\mu_d$ and $g\in G$.
If $\pi^{-1}(p_i)$ contains less than $d$ points then their residues are zero.
On the other hand, if $|\pi^{-1}(p_i)|=d$ then the residues at the points of $\pi^{-1}(p_i)$ are of the form $\tila_i,\zeta_d \tila_i\.\zeta_d^{d-1} \tila_i\in\tilk$ where $\zeta_d$ is a primitive $d$-th root unity and $\tila_i^d=\tilR_i$.

We now construct a star-shaped tropical reduction datum $(\hatgamma, \hatx)= \left( \hatcalC, \hatx, \tilomega^\gr,\hatgtR\right)$ in the sense of \cite{TT20}.
Recall that a metrized curve complex consists of a skeleton with reduction $\tilk$-curves attached at type 2 points and marked points on these curves.

The skeleton $\hatGamma$ consists of a type 2 vertex $z$ for each point of $\cup_i\pi^{-1}(p_i)\subset\hatC$ together with another vertex of type 2 denoted $\hatx$ and edges $E_z$ connecting each $z$ to $\hatx$ of length $\frac{\len(e_i)}{d}$ where $\len(e_i)$ denotes the length of $e_i$ in the given skeleton $\Gamma$.
Next we attach the curve $\hatC$ at $\hatx$ and mark the closed points corresponding to the edges to obtain a metrized curve complex $\hatcalC$.
Note that $\left( \hatcalC, \hatx\right)$ is a star-shaped metrized curve complex in the sense of \S~\ref{sec star-shape}.

For the graded-reduced differential form we take $\tilomega$ placed at level $\frac{\ell(x)}{q}$ (see \S~\ref{sec graded and scaled red}).
Finally, we define the function $\hatgtR\colon E(\hatGamma) \to k$ as follows.
Let $E_z=[\hatx,z]$ be an edge of $\hatGamma$.
By construction, $E_z$ corresponds to a point $\hatp\in\pi^{-1}(p_i)\subset\hatC$ for some $i$.
If $|\pi^{-1}(p_i)|<d$ we set $\hatgtR (E_z)=0$.
If $|\pi^{-1}(p_i)|=d$ we choose a lift $b_i\in k$ of $\tila_i$ from above and set $\hatgtR (E_z)=\zeta_d^jb_i$ where $j$ is chosen such that $\widetilde{\hatgtR (E_z)}=\zeta_d^j\tila_i=\res_{\hatp}(\tilomega)$.
Note that
\[\sum_{E\in\Star(\hatx)}\hatgtR (E)=\sum b_i\sum_{j=0}^{d-1}\zeta_d^j=0\]
where the first sum is over all $i$ such that $|\pi^{-1}(p_i)|=d$.
This is condition (3) of \cite[Definition~3.1.4]{TT20}.
It is easy to check that conditions (1), (2) and (4) of the definition are also satisfied, that is $\hatgamma$ is indeed a tropical reduction datum.

According to \cite[Theorem~3.3.1]{TT20} there exists a  proper nice $k$-curve $\hatY$ equipped with a differential form $\hatomega_0$ lifting $\hatgamma$.
Furthermore, we can also lift the $\tilk$-automorphism $\tau$ to a $k$-automorphism $\hattau\colon \hatY\to \hatY$ also of order $d$ such that
the set of zeros and poles of $\hatomega_0$ is $\hattau$-invariant.\footnote{We can lift $\hattau$ first by Lemma \ref{lem lift of auto} and then the construction in the proof of \cite[Theorem~3.3.1]{TT20} does not affect the result.}
It follows that $Y'$ is equipped with a $\mu_d\toisom <\hattau>$ action and the set of zeros and poles of $\hatomega_0$ is $\mu_d$-invariant.

Replacing $\hatomega_0$ with $\hatomega=\frac{1}{d}\sum_{g\in \mu_d} \chi(g)^{-1}g(\omega)$ we obtain a $\mu_d$-equivariant form without changing the reduction and the residues, which are already equivariant.
The $q$-th power of $\hatomega$ is invariant and hence descends to $Y=\hatY/\mu_d$.
Setting $\xi$ to be the $q$-differential form induced by $\hatomega^q$ we obtain the desired local lifting of our tropical reduction datum $\gamma$.

\subsubsection{The case where $x$ is in the boundry}
We enlarge the given star-shaped tropical reduction datum $(\gamma,x)$ to a star-shaped tropical reduction datum $(\gamma',x)$ without boundary.
We then apply the previous part to obtain lift $(Y',x,\xi')$ of $(\gamma',x)$.
Finally we remove the unwanted branches of $Y'$ to obtain a star-shaped curve $(Y,x)$ as required and take $\xi=\xi'|_Y$.

Let $\oC_x$ be the smooth compactification of $C_x$.
Then $\oC_x\setminus C_x$ is a finte set of closed points $\{p_{n+1}\.p_{a}\}$.
For each $n+1\le i\le a$ we attach a leg to $x\in \Gamma$ and replace $C_x$ with $\oC$ to obtain a star-shaped skeleton without boundary.
It remains to extend $\gtR^q$ to the new legs $e_{n+1}\.e_{a}$.
Again we need to distinguish two cases in order to satisfy the $q$-harmonicity condition: $d=1$ and $d>1$.

If $d>1$ we can simply set $\gtR^q(e_i)=0$ for each $n+1\le i\le a$.

If $d=1$ we need to chose the values $R_i=\gtR^q(e_i)$ for $n+1\le i\le a$ in such a way that $P_{a,q}(\gtR^q(e_1)\. \gtR^q(e_a))=0$.
For each $1\le i \le n$ choose $r_i\in k$ such that $r_i^q=R_i$.
Now take $R_{n+1}=\left(-\displaystyle\sum_{j=1}^n r_j\right)^q$ and $R_{i}=0$ for each $n+1< i\le a$.

\subsection{Lifting a general tropical reduction datum}
We now prove Theorem \ref{theor lifting} in the general case.
 
Given a tropical reduction datum $\gamma = \left( \calC, \tilxi^\gr,\gtR^q\right)$ we cut it into star-shaped tropical reduction data, one for each type 2 vertex of the given skeleton and lift each one to a star-shaped curve as described above.

\subsubsection{Patching}
Finally, we patch together the star-shaped curves and $q$-differentials constructed in the previous section to obtain a nice $k$-analytic curve $X$ and $q$-differential $\xi$ liftting $\gamma$.
We will need the following
\begin{prop} \label{prop good coor}
If a $q$-differential $\xi$ has neither zeros nor poles on an analytic annulus $\calA$, then $\calA$ admits a coordinate $t$ such that
either of the following cases holds
\begin{enumerate}
	\item $\xi=(c_nt^{\frac{n}{q}}+c_0)^q(\frac{dt}{t})^q$ with $q|n\neq 0$ and $|c_nt^{\frac{n}{q}}|_x>|c_0|_x$ for all $x\in \calA$.
	\item $\xi=c^q_nt^n(\frac{dt}{t})^q$.
\end{enumerate}
\end{prop}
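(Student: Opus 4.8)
The plan is to reduce to a closed annulus, strip off the dominant term, and then split on whether the dominant exponent is divisible by $q$: the divisible case is imported from the $1$-differential theory of \cite{TT20} via Lemma~\ref{lem q-form is q power annulus}, while the non-divisible case is settled by solving a coordinate-change equation directly.

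First I would reduce to a closed annulus exactly as in the proof of Lemma~\ref{lem q-form is q power annulus}. Choosing a coordinate $s$, write $\xi = \sum_i a_i s^i\,(\frac{ds}{s})^q$; since $\xi$ has neither zeros nor poles it has a dominant term $a_n s^n$ (\cite[\S~4.1.2]{BT20}), where $n = -\frac{\partial \ell_\xi}{\partial e}$. Factoring it out gives $\xi = a_n s^n (1+f)(\frac{ds}{s})^q$ with $|f|_x < 1$ for every $x \in \calA$. The whole task is to find a coordinate $t = s\,v$, with $v$ a unit of reduction $1$ (so that $t$ has the same skeleton and orientation), absorbing $f$ into one of the two normal forms. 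By Lemma~\ref{lem q-form is q power annulus}, $q \mid n$ is precisely the condition for $\xi$ to be a $q$-th power, and this dichotomy governs the two cases.

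When $q \mid n$, Lemma~\ref{lem q-form is q power annulus} furnishes a $1$-differential $\omega$ on $\calA$, free of zeros and poles, with $\xi = \omega^q$ and dominant exponent $n/q$. Applying the good-coordinate theorem for $1$-differentials \cite[Proposition~4.1.3]{TT20} to $\omega$ yields a coordinate $t$ with $\omega = (c\,t^{n/q} + c_0)\frac{dt}{t}$, where $c\,t^{n/q}$ is dominant (so $|c\,t^{n/q}|_x > |c_0|_x$ when $n\neq 0$) and $c_0 = \Res_\calA(\omega)$. Taking $q$-th powers gives $\xi = (c\,t^{n/q} + c_0)^q (\frac{dt}{t})^q$, which is case~(1) when $n \neq 0$ and $c_0 \neq 0$, and degenerates to case~(2) when $n = 0$ or $c_0 = 0$.

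The case $q \nmid n$ is the crux, since now $\xi$ is not a $q$-th power and the $1$-differential result does not apply; I must kill every subdominant term directly and land in case~(2). I would seek $t = s(1+\phi)$ with $|\phi|_x < 1$ so that $\xi = a_n t^n (\frac{dt}{t})^q$. Using $\frac{dt}{t} = \bigl(1 + \frac{s\phi'}{1+\phi}\bigr)\frac{ds}{s}$, this becomes the equation $(1+\phi)^n \bigl(1 + \frac{s\phi'}{1+\phi}\bigr)^q = 1 + f$. Its linearization at $\phi = 0$ is the operator $L = n + q\,s\frac{d}{ds}$, acting by $s^k \mapsto (n+qk)s^k$; since $q \nmid n$ the integer $n + qk$ is nonzero for all $k \in \bbZ$, and because $\tilk$ has characteristic zero $|n+qk| = 1$, so $L$ is invertible with an \emph{isometric} inverse. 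Rewriting the equation as the fixed-point problem $\phi = L^{-1}(f - N(\phi))$, where $N$ collects the superlinear terms, I would solve it by successive approximation. The main obstacle, and the point where residue characteristic zero is indispensable, is showing this map is a contraction: one uses that $s\frac{d}{ds}$ is norm-nonincreasing (again because $|k| = 1$ for nonzero integers $k$), that the binomial expansions defining $N$ and the implicit $q$-th roots converge, and hence that $\|N(\phi)\|$ is controlled by $\|\phi\|^2$; together with $\|f\|<1$ this closes the contraction on the ball $\{\|\phi\|\le\|f\|\}$, and the resulting $\phi$ gives the desired coordinate. For contrast, when $q \mid n$ the operator $L$ acquires the one-dimensional kernel spanned by $s^{-n/q}$; this residual term is exactly the next-to-leading coefficient of $\xi$ producing the residue $c_0$, which is why that case yields the two-term form~(1) and is handled most cleanly through \cite[Proposition~4.1.3]{TT20}.
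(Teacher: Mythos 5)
Your proposal is correct in substance, but it takes a genuinely different route from the paper. The paper does not split into the cases $q\mid n$ and $q\nmid n$ at the outset: it works uniformly through the Kummer cover $\phi\colon\calA'\to\calA$, $\phi^*(s)=y^q$, writes $\phi^*(\xi)=(q\omega)^q$ for a $1$-differential $\omega$ on $\calA'$, applies \cite[Proposition~4.1.3]{TT20} to $\omega$, and then inspects the successive approximations in that proof to show that the good coordinate $z$ can be chosen $\mu_q$-equivariantly ($g(z)=\zeta z$), so that $t=z^q$ descends to a coordinate on $\calA$; the dichotomy in the statement then falls out of the character computation $g(\omega)=\zeta^n\omega$, which forces $c_0=0$ exactly when $q\nmid n$. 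Your handling of the divisible case is cleaner than the paper's: Lemma~\ref{lem q-form is q power annulus} plus \cite[Proposition~4.1.3]{TT20} applied to the $q$-th root $\omega$ gives case (1) with no covering argument at all. In the non-divisible case you replace descent by a self-contained contraction: the linearization $L=n+qs\frac{d}{ds}$ acts on monomials by $s^k\mapsto(n+qk)s^k$, the non-resonance $n+qk\neq 0$ comes from $q\nmid n$, and $|n+qk|=1$ comes from residue characteristic zero, so $L^{-1}$ is isometric; since all coefficients of the nonlinearity $N$ are integers and $\phi\mapsto s\phi'$ is norm-nonincreasing, one gets $\|N(\phi)\|\le\|\phi\|^2$ and $\|N(\phi)-N(\phi')\|\le\max(\|\phi\|,\|\phi'\|)\,\|\phi-\phi'\|$, which closes the fixed-point argument on the ball $\|\phi\|\le\|f\|<1$ in the Banach algebra $k\{s,rs^{-1}\}$. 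This buys independence from the internals of \cite{TT20} (which the paper must open up anyway to verify equivariance of the approximation scheme), and it explains conceptually why a surviving constant $c_0$ can occur only when $q\mid n$: it spans the kernel of $L$. What the paper's route buys is uniformity and reuse: the same cover-plus-equivariance machinery is the engine of the proof of Theorem~\ref{theor lifting}, so no separate analytic argument is needed.

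Two points in your write-up need repair. First, the reduction to a closed annulus is \emph{not} ``exactly as in Lemma~\ref{lem q-form is q power annulus}'': that lemma reduces a divisibility assertion which can be tested on a single closed subannulus, whereas a good coordinate on each closed annulus of an exhaustion does not formally yield one on the open annulus; the paper devotes a separate part of Section~\ref{sec good coor} to checking that its construction is compatible with restriction. In your argument the needed compatibility does hold, because the fixed point is unique in the ball and restriction of the solution on a larger closed annulus solves the same equation on a smaller one, so the solutions glue over the exhaustion -- but this must be said, and the analogous compatibility must be quoted or proved for the case $q\mid n$ where you invoke \cite[Proposition~4.1.3]{TT20}. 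Second, you should record the standard fact (used implicitly by the paper as well) that $t=s(1+\phi)$ with $|\phi|_x<1$ for all $x\in\calA$ is again a coordinate of $\calA$ inducing the same skeleton and orientation; this is what makes the output of the contraction an admissible coordinate change.
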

Note that in case $(1)$ we have $\Res^q_\calA(\xi)=c^q_0$ and in case $(2)$ we have $\Res^q_\calA(\xi)=c^q_0$ if $n=0$ and $\Res^q_\calA(\xi)=0$ if $q$ does not divide $n$ (if $n\neq 0$ and $q$ divides $n$ we are back in case $(1)$).
We call such a coordinate \emph{good} with respect to $\xi$.
For expositional purposes we defer the proof to \S~\ref{sec good coor}.

Let $e$ be an edge of the skeleton $\Gamma$.
We distinguish between two types of patchings.

First, assume that $e = [x, z]\in L(\Gamma)$ is a leg.
Let $(Y,x,\xi_x)$ be a star-shaped curve and $q$-differential as constructed above.
There is an open annulus $\calA$ in $Y\setminus\{x\}$ with skeleton lying along $e$. Equip $\calA$ with the $q$-differential $\xi_\calA=\xi_x|_\calA$.
By Proposition \ref{prop good coor} there exists a coordinate $t$ on $\calA$ such that either $\xi_\calA=(c_nt^{\frac{n}{q}}+c_0)^q(\frac{dt}{t})^q$ with $n\neq 0$ a multiple of $q$ or $\xi_\calA=c^q_nt^n(\frac{dt}{t})^q$, such that $n=-\frac{\partial \ell}{\partial e}$ and $c^q_0=\gtR^q(e)$.
Note that both $\frac{\partial \ell}{\partial e}$ and $\gtR^q(e)$ are determined by the given tropical reduction datum.

Assume that $\xi_\calA=(c_nt^{\frac{n}{q}}+c_0)^q(\frac{dt}{t})^q$.
Consider an open unit disc $D_z$ with origin at $z$ and a coordinate $s$.
Equip $D_z$ with the $q$-differential $\xi_z=(c_ns^{\frac{n}{q}}+c_0)^q(\frac{ds}{s})^q$, 
and glue $Y$ and $D_z$ along $\calA$ via $t = s$.
Clearly, $\xi_x$ and $\xi_z$ agree on $\calA$.
The case where $\xi_\calA=c^q_nt^n(\frac{dt}{t})^q$ is similar.

Next, assume that $e \in E(\Gamma)$ is a bounded edge and set $y = \gtt(e)$ and $z = \gth(e)$.
The cases $y = z$ and $y \neq z$ are similar, so assume that $y\neq z$.
Consider the star-shaped local lifts $(Y, y, \xi_y)$ and $(Z,z, \xi_z)$ with open annuli $\calA_Y \subset Y$ and $\calA_Z \subset Z$.
Then the orientation of $\calA_Y$ is compatible with $e$, and of $\calA_Z$ is not.
By cutting the annuli into annuli of smaller moduli we may assume that the skeletons of $\calA_Y$ and $\calA_Z$ are short enough to fit in the edge $e$ without interesction.
Set $n=-\frac{\partial \ell}{\partial e}=\frac{\partial \ell}{\partial e^\opp}$.
By Proposition \ref{prop good coor}, there exist good analytic coordinates $t$ and $s$ on the annuli.
That is, if $n\neq 0$ and is a multiple of $q$ then we have $\xi_y |_{\calA_Y} = (\alpha t^{\frac{n}{q}} + c)^q (\frac{dt}{t})^q$
and $\xi_z|_{\calA_Z} = (\beta s^{-\frac{n}{q}} + c')^q (\frac{ds}{s})^q$ where $c^q=\gtR^q(e)$ and $c'^q=\gtR^q(e^\opp)=(-1)^q\gtR^q(e)$.
Note that $(\frac{c}{c'})^q=(-1)^q$.
Take an open annulus $\calA$ of modulus equal to the length of $e$ with coordinate $\tau$ and equip it with the $q$-differential $\xi_\calA = (\alpha \tau^{\frac{n}{q}} + c)^q (\frac{d\tau}{\tau})^q$.
Now glue $\calA$ to $\calA_Y$ and  $\calA_Z$ via $\tau = t$ and $\tau = \frac{c\beta}{c'\alpha}s^{-1}$.
The case of $n=0$ or $n$ is not a multiple of $q$ is similar and we omit its proof.

\begin{rem}
It is important to note that what makes the above gluing possible is the fact that the relevant parameters (the free terms and the exponents of the leading terms of the $q$-differentials) are determined by the given tropical reduction datum and correspond to the edge $e$ that we are working with, hence they are compatible on both ends of $e$.
\end{rem}

\section{Good coordinates} \label{sec good coor}

In order to have a better understanding of the $q$-residue function we classify $q$-differential forms without zeros and poles on an annulus $\calA$.
Furthermore, this classification will provide us with a "standard" presentation of $q$-differentials which makes glueing $q$-differentials along an annulus easy.
For this purpose we introduce the notion of a good coordinate of an annulus with respect to a $q$-differential form.

\subsection{The definition}
Recall that in \cite[\S~4.1]{TT20} the authors define a good coordinate of an oriented annulus $\calA$ with respect to a differential form $\omega$ having neither zeros nor poles on $\calA$ as a coordinate $t$ such that either $\omega=c_0\frac{dt}{t}$ or $\omega=(c_nt^n+c_0)\frac{dt}{t}$, $n\neq 0$ and $|c_nt^n|_x>|c_0|_x$ and all $x\in \calA$.
Note that $\Res_\calA(\omega)=c_0$.
We now extend the definition as follows

\begin{defin}
Let $\calA$ be an oriented annulus and $\xi$ a $q$-differential form having neither zeros nor poles on $\calA$.
A coordinate $t$  on $\calA$ will be called \emph{good} with respect to $\xi$ if either of the following cases holds
\begin{enumerate}
	\item $\xi=(c_nt^{\frac{n}{q}}+c_0)^q(\frac{dt}{t})^q$ with $q|n\neq 0$ and $|c_nt^{\frac{n}{q}}|_x>|c_0|_x$ for all $x\in \calA$.
	\item $\xi=c^q_nt^n(\frac{dt}{t})^q$.
\end{enumerate}
\end{defin}

\begin{rem}
Given a coordinate $t$ of $\calA$ we can write $\xi=f(t) (\frac{dt}{t})^q$ with $f(t)=\sum_i a_it^i\in k\{t,rt^{-1}\}$.
As $\xi$ has neither zeros nor poles the series $f(t)$ has a dominant term, i.e.\ there is some $n\in \bbZ$ such that
\[|a_it^i|_x< |a_nt^n|_x\] for all $i\neq n$ and all $x\in \calA$ (see \cite[\S~4.1.2]{BT20}).
Furthermore  if $a_nt^n$ is the dominant term of $f(t)$ then $|n|$ (the usual absolute value of $n$)  is an invariant of  $f(t)$. For another coordinate $t_1$ the dominant term of $f(t_1)$ is given by the index $n$ if the coordinate change $t_1=\phi(t)$ is orientation preserving and by the index $-n$ if the coordinate change is orientation reversing.
\end{rem}

\subsection{Good coordinate for a closed annulus} \label{sec proof good coor closed}
We now prove Proposition \ref{prop good coor} for a closed annulus.

Let $\calA$ be an analytic closed annulus and $\xi$ a $q$-differential having neither zeros nor poles on $\calA$.
We will show that $\calA$ admits a good coordinate with respect to $\xi$.

Our strategy of proof here is the same one used in the proof of Theorem \ref{theor lifting}.
That is, we pull back $\xi$ along a $q$-cover $\phi\colon \calA'\to \calA$, apply \cite[Proposition~4.1.3]{TT20} to a suitably constructed differential form $\omega$ on $\calA'$ equipped with a group action of $\mu_q$ and take the quotient to obtain the result.

\begin{proof}
Choosing a coordinate $s$ we have $\calA=\calM\left(k\{s,rs^{-1}\} \right)$.
Write $\xi=\sum_i a_is^i (\frac{ds}{s})^q$ and assume that  the dominant term is given by the index $n$.
Denoting $\lam(s)=\sum_{i\neq n} \frac{a_is^{i}}{a_ns^n}$ we have $\xi=a_ns^n\left(1+\lam(s)\right)(\frac{ds}{s})^q$.
Note that $|\lam(s)|_x<1$ for all $x\in \calA$ and $1+\lam(s)$ is a unit of $k\{s,rs^{-1}\}^\circ$.

Set $\calA'=\calM(k\{y,r^\frac{1}{q}y^{-1}\})$ and let $\phi\colon \calA'\to \calA$ be the $q$-cover given by $\phi^*(s)=y^q$.
We obtain
\[\phi^*(\xi)=a_ny^{nq}\left(1+\lam(y^q)\right) (q\frac{dy}{y})^q.\]
We can now choose a converging series $\alpha(y^q)\in k\{y^q,ry^{-q}\}^{\circ\circ} \subset k\{y,r^\frac{1}{q}y^{-1}\}^{\circ\circ}$ and $b\in k$ such that $(1+\alpha(y^q))^q=1+\lam(y^q)$ and $b^q=a_n$
and write
\[\phi^*(\xi)=\left(by^{n}\left(1+\alpha(y^q)\right)q\frac{dy}{y}\right)^q\]
or simply $\phi^*(\xi)=(q\omega)^q$ with $\omega=by^n(1+\alpha(y^q))\frac{dy}{y}$ which is a differential form on $\calA'$ having neither zeros nor poles, whose dominant term is also given by the index $n$ and unique up to multiplication with a $q$-th root of unity.

By \cite[Proposition~4.1.3]{TT20} the annulus $\calA'$ has a good coordinate $z$ with respect to $\omega$.
That is
\[\omega=
	\begin{cases}
	c_0\frac{dz}{z}& \text{if } n=0 \\
	(c_nz^n+c_0)\frac{dz}{z} & \text{if } n\neq0 .
	\end{cases}\]
As the different choices of $\omega$ differ only by multiplication with a constant, we see that $z$ is already a good coordinate with respect to all choices of $\omega$.

Let $\mu_q=\left< g \right>$ be the cyclic group of order $q$ and let $\mu_q$ act on $\calA'$ by $g(y)=\zeta y$ where $\zeta=\zeta_q$ is a primitive $q$-th root of unity.
Clearly $\phi\circ g=\phi$.
It is also clear that $g\left(\alpha(y^q)\right)=\alpha(g(y)^q)=\alpha(y^q)$.
It follows that
\[\omega=by^n(1+\alpha(y^q))\frac{dy}{y}=\zeta^{-n}bg(y)^n(1+\alpha(g(y)^q))\frac{dg(y)}{g(y)}\]
or in other words
\[g(\omega)=\zeta^n\omega=\zeta^j\omega\]
for $0\le j\le q-1$ such that $j \equiv n \mod q$.

Given a differential form $\eta$ and a coordinate $y=y_0$, \cite[Proposition~4.1.3]{TT20} constructs a good coordinate $z$ with respect to $\eta$ as the limit of successive approximations $y_j$ each obtained from the previous by multiplication by a unit $u_j$.
Furthermore, $u_j$ is of the form $u_j=1+v_j$ where $|v_j|_x<1$ for all $x\in \calA'$ and $v_j$ is constructed from the coefficients of the presentation of $\eta$ with respect to the coordinate $y_{j-1}$.
More precisely, given $\eta=\displaystyle\sum_if_iy^i\frac{dy}{y}$ with dominant term $f_ny^n$ for $n\neq 0$, the coordinate change $y_1=yu_1$ is given by choosing $u_1$ to be the unique $n$-th root of $1+\displaystyle\sum_{i\neq 0,n} \frac{n}{i}\frac{f_iy^i}{f_ny^n}$ satisfying $|u_1-1|_x<1$ for all $x\in \calA'$ (in other words, $u_1$ is the unique $n$-th root of $1+\displaystyle\sum_{i\neq 0,n} \frac{n}{i}\frac{f_iy^i}{f_ny^n}$ whose free coefficient is $1$).
Writing $\eta=f_ny^n(1+\displaystyle\sum_{l\neq 0}h_ly^l)$ with $h_l=\frac{f_{l+n}}{f_n}$, we have
\[u_1^n=1+\sum_{l\neq 0,-n}\frac{n}{l+n}h_ly^l.\]
Applying the above to $\omega=by^n(1+\alpha(y^q))\frac{dy}{y}$ with $\alpha(y^q)=\displaystyle\sum_{i\neq 0} \alpha_{iq}y^{iq}$ we obtain
\[u_1^n=1+\sum_{i\neq 0,-\frac{n}{q}}\frac{n}{iq+n}\alpha_{iq}y^{iq}.\]
It follows that $g(u_1)=u_1$ hence $g(y_1)=g(y)u_1=\zeta y_1$.
Repeating the argument for all $j$ we have that $u=\prod u_j$ is also $\mu_q$-invariant so the good coordinate $z=yu$ is $\mu_q$-invariant as well.
A quick check shows that the same conclusion also applies to the $n=0$ case.
We conclude that $t=z^q$ defines a coordinate on $\calA$.

Now, $\omega$ is $\mu_q$-invariant whenever $q$ divides $n$ (including $n=0$) as $g(\omega)=\zeta^n\omega$.
When $q$ does not divide $n$ we have
\[\zeta^n(c_nz^n+c_0)\frac{dz}{z} =\zeta^n\omega=g(\omega)=(c_n\zeta^nz^n+c_0)\frac{dz}{z}\]
so $c_0$ must be zero.
That is $t$ is a good coordinate with respect to $\xi$.
\end{proof}

\begin{rem}
When $n=0$ the coordinate $t$ can also be found in a more direct way, maybe even explicitly.
We are looking for a unit $u(s)$ such that $\xi=a_0(\frac{dt}{t})^q$ for the coordinate $t=su$. Set $v(s)=\displaystyle\sum_{i\neq 0} \frac{a_i}{a_0}s^i$.
Note that $v(s)$ is an element of $k\{s,rs^{-1}\}$ and $|v|_x<1$ for all $x\in \calA$ as $0$ is the index of the dominant term of $\xi$.
Writing $\frac{dt}{t}=(1+\frac{s}{u}\frac{du}{ds})\frac{ds}{s}$ we obtain the equation
\[\left( 1+\frac{s}{u}\frac{du}{ds} \right)^q=1+v.\]
Recalling the Taylor expansion of $(1+x)^\frac{1}{q}$ we obtain the equation
\[\frac{du}{u}=\sum_{m=1}^\infty \binom{\frac{1}{q}}{m}v^m\frac{ds}{s}\]
which is solvable.
\end{rem}

\subsection{Good coordinate of an open annulus}
We now extend the result to open annuli.

Let $\calA=\calA(r,1)$ be an oriented open annulus of modulus $0 < r < 1$ and $\xi$ a $q$-differential form having neither zeros nor poles on $\calA$.
Let $s$ be a coordinate of $\calA$.
Choosing sequences $(r_i)$ and $(r'_i)$ such that $\displaystyle\lim_{i\to \infty} r_i=r$ , $\displaystyle\lim_{i\to \infty} r'_i=1$ and $r_i<r'_i$ for all $i$ we form the covering $\calA=\cup _i\calA[r_i,r'_i]$.
The restriction $\xi|_{\calA[r_i,r'_i]}$ is a $q$-differential form having neither zeros nor poles on $\calA[r_i,r'_i]$ and $s_i=s|_{\calA[r_i,r'_i]}$ is a coordinate of $\calA[r_i,r'_i]$.
As we saw above $\calA[r_i,r'_i]$ has a good coordinate $z_i$ with respect to $\xi|_{\calA[r_i,r'_i]}$.
Furthermore, by applying the process detailed in \S\ref{sec proof good coor closed} to $s_j$ for some $j$, we see that not only is the obtained $z_j$ a good coordinate of $\calA[r_j,r'_j]$ with respect to $\xi|_{\calA[r_j,r'_j]}$, but also for any $i<j$ we have that $z_j|_{\calA[r_i,r'_i]}$ is a good coordinate of $\calA[r_i,r'_i]$ respect to $\xi|_{\calA[r_i,r'_i]}$.

A similar argument applies to semi-open annuli of the form $\calA[r,1)$ and $\calA(r,1]$ and in particular to a \emph{punctured disc} i.e.\ an analytic space isomorphic to $D\setminus\{O\}$ where $D=\calM(k\{t\})$ is the closed unit disc and $O\in D$ is the origin i.e.\ the point of type 1 corresponding to $0\in k$.
Note however that the induced orientation of an annulus is the one in which the coordinate is decreasing along the skeleton while the induced orientation of a punctured disc $D\setminus\{O\}$ is the one in which the coordinate is increasing along the skeleton $(O,1]$.

\subsection{Orientation and $q$-residue}
The results of the previous sections are invariant under orientation preserving coordinate change. In other words, given a $q$-differential form $\xi$ on $\calA=\calM( k\{s,rs^{-1}\})$ then the coeficient $c_0^q$ found above is well defined up to an orientation preserving coordinate change.
To complete the picture to orientation reversing coordinate changes as well we only need to consider the coordinate change $t=s^{-1}$ of a good coordinate $s$.
Assuming $\xi=(c_ns^{\frac{n}{q}}+c_0)^q(\frac{ds}{s})^q$ we have
\[\xi=(c_nt^{-\frac{n}{q}}+c_0)^q(-\frac{dt}{t})^q=(-1)^{q}(c_nt^{\frac{-n}{q}}+c_0)^q(\frac{dt}{t})^q \]
and we see that $t$ is also a good coordinate as the index of the dominant term changes sign under an orientation reversing coordinate change.
We conclude that under an orientation reversing coordinate change, the free coefficient with respect to a good coordinate is multiplied by $(-1)^q$ as we already saw in Proposition \ref{prop_properties of residue function}(1).

\bibliographystyle{amsalpha}

\bibliography{q-diff}

\newcommand{\etalchar}[1]{$^{#1}$}
\providecommand{\bysame}{\leavevmode\hbox to3em{\hrulefill}\thinspace}
\providecommand{\MR}{\relax\ifhmode\unskip\space\fi MR }
\providecommand{\MRhref}[2]{%
  \href{http://www.ams.org/mathscinet-getitem?mr=#1}{#2}
}
\providecommand{\href}[2]{#2}
\begin{thebibliography}{BCG{\etalchar{+}}18}

\bibitem[BCG{\etalchar{+}}16]{BCGGM16}
Matt Bainbridge, Dawei Chen, Quentin Gendron, Samuel Grushevsky, and Martin
  M{\"o}ller, \emph{Strata of $ k $-differentials}, arXiv preprint
  arXiv:1610.09238 (2016).

\bibitem[BCG{\etalchar{+}}18]{BCGGM18}
\bysame, \emph{Compactification of strata of abelian differentials}, Duke
  Mathematical Journal \textbf{167} (2018), no.~12, 2347--2416.

\bibitem[BT20]{BT20}
Uri Brezner and Michael Temkin, \emph{Lifting problem for minimally wild covers
  of berkovich curves}, Journal of Algebraic Geometry \textbf{29} (2020),
  no.~1, 123--166.

\bibitem[EV92]{EV92}
H\'{e}l\`{e}ne Esnault and Eckart Vieweg, \emph{Lectures on vanishing
  theorems}, Mathematical Surveys and Monographs, vol.~20, Birkhäuser Basel,
  1992.

\bibitem[GT17]{GT}
Quentin Gendron and Guillaume Tahar,
  \emph{Diff$\backslash$'erentielles$\backslash$a singularit$\backslash$'es
  prescrites}, arXiv preprint arXiv:1705.03240 (2017).

\bibitem[MOL18]{M18}
MARTIN MOLLER, \emph{Geometry of teichm{\"u}ller curves}, Proceedings of the
  International Congress of Mathematicians (ICM 2018) (In 4 Volumes)
  Proceedings of the International Congress of Mathematicians 2018, World
  Scientific, 2018, pp.~2017--2034.

\bibitem[Sch16]{S}
Johannes Schmitt, \emph{Dimension theory of the moduli space of twisted $ k
  $-differentials}, arXiv preprint arXiv:1607.08429 (2016).

\bibitem[Tem16]{Metrization}
Michael Temkin, \emph{Metrization of differential pluriforms on berkovich
  analytic spaces}, pp.~195--285, 01 2016.

\bibitem[TT20]{TT20}
Michael Temkin and Ilya Tyomkin, \emph{Reduction and lifting problem for
  differential forms on berkovich curves}, arXiv preprint arXiv:2005.01397
  (2020).

\bibitem[Wri15]{W}
Alex Wright, \emph{Translation surfaces and their orbit closures: an
  introduction for a broad audience}, EMS Surveys in Mathematical Sciences
  \textbf{2} (2015), no.~1, 63--108.

\end{thebibliography}

\end{document}